\theoremstyle{plain}
\newtheorem{theorem}{Theorem}[section]
\newtheorem{lemma}[theorem]{Lemma}
\newtheorem{corollary}[theorem]{Corollary}
\theoremstyle{definition}
\newtheorem{definition}[theorem]{Definition}
\newcommand{\N}{\mathbb{N}}
\newcommand{\Z}{\mathbb{Z}}
\newcommand{\F}{\mathbb{F}}
\newcommand{\A}{\mathcal{A}}
\newcommand{\powerset}{\mathcal{P}}
\begin{document}
\title{Turing Machines on Graphs and Inescapable Groups}
\author{Aubrey da Cunha}
\begin{abstract}
	We present a generalization of standard Turing machines based on allowing unusual tapes.  We present a set of reasonable constraints on tape geometry and classify all tapes conforming to these constraints.  Surprisingly, this generalization does not lead to yet another equivalent formulation of the notion of computable function.  Rather, it gives an alternative definition of the recursively enumerable Turing degrees that does not rely on oracles.  The definitions give rise to a number of questions about computable paths inside Cayley graphs of finitely generated groups, and several of these questions are answered.
\end{abstract}

\maketitle

\section{Introduction}
	When Alan Turing originally defined his $a$-machines, which would later be called Turing machines, he envisioned a machine whose memory was laid out along a one-dimensional tape, inspired by the ticker tapes of the day.  This seemed somewhat arbitrary and perhaps unduly restrictive, and so, very quickly, machines with multiple and multi-dimensional tapes were proposed.  The focus at the time was defining the term ``computable", and as adding tapes and dimensions defined the same class of functions as Turing's original, simpler model, studying alternate tape geometries fell out of favor for some time.

	The complexity theory community then reignited interest in alternative tape geometries by considering not the class of functions computable by Turing machines, but time and space complexity of functions on different tape geometries.  This led to a number of results about relative efficiency of machines with one/many tapes and one/two/high-dimensional tapes.  For example, the language of palindromes (those strings that read the same forward and backward) can be computed in $O(n)$ time on a two-tape machine, but requires $\Omega(n^2)$ on a one-tape machine with one read/write head.  Or, an $m$-dimensional Turing machine running in time $T(n)$ can be simulated by a $k$-dimensional Turing machine ($k<m$) in time $T(n)^{1+\frac{1}{m}-\frac{1}{k}+\epsilon}$ for all $\epsilon>0$ \cite{Grigorev1979}.

	Many of the proofs and algorithms used in the study of multidimensional Turing machines make their way into or are inspired by the world of mesh-connected systems.  Mesh-connected systems are arrays of identical, relatively dumb processors (typically with memory logarithmic in the input size) that communicate with their neighbors to perform a computation.  Time use on a Turing machine with a $d$-dimensional tape is intimately tied to the power use of a $d$-dimensional mesh of processors with finite memory, so there is some natural crossover.  Mesh-connected systems constitute an area of very active research now, but since it remains very closely tied to the physical implementation, research is generally restricted to two- and three-dimensional grids.

	In this paper, we go beyond the world of rectangular grids and consider tapes at their most general.  The purpose of this is two-fold.  First, to give the complexity theorists a general framework in which to work, subsuming all current tape-based Turing models.  Second, to provide some evidence that alternative tape geometries are interesting from a recursion-theoretic perspective.  Along the way, we will encounter some questions in combinatorial group theory that aren't directly related to generalized Turing tapes, but are interesting in their own right.

	Any Turing tape can be modeled as a digraph with nodes corresponding to tape cells and edges corresponding to allowable transitions.  Hence, we start by introducing a number of graph-theoretic conditions that ought to be satisfied by a reasonable tape.  It turns out that the criteria we outline are necessary and sufficient conditions for the graph to be the Cayley graph of a finitely generated, infinite group.

	We then turn to the question of whether allowing arbitrary Cayley graphs as tapes is just another equivalent machine model for the class of computable functions.  Interestingly, this will depend on the structure of the group from which the Cayley graph is generated.  For groups with solvable word problem, this does indeed lead to machines that compute the class of computable functions, however for groups with unsolvable word problems, these machines are strictly more powerful than standard Turing machines.  In fact, they can be as powerful as any oracle machine and we end up with an alternative definition of the Turing degrees that is machine based and doesn't rely on oracles.

	The constructions and proofs of these results begin to raise questions about what kind of computable objects we can hope to find in arbitrary Cayley graphs.  In particular, whether we can always find an infinite, computable, non-self-intersecting path.  We call such a path an escape and we construct a group without any escapes.  This construction is non-trivial as any group without an escape must also be a Burnside group, which we also prove.

	Throughout the rest of this piece, all Turing machines will have a single tape and a single head.  This is the easiest case to treat and the generalization to multiple tapes and multiple heads is entirely analogous to the standard Turing picture.

\section{General Tape Geometries}
	Any tape essentially consists of a collection of cells, each of which can hold a single symbol, together with a mechanism for moving from one cell to another.  So underlying any tape is an edge-colored digraph.  Edges in this graph represent allowable transitions between states and the edge coloring encodes the conditions on the stored symbol and control state under which that particular transition occurs.  In order to be a reasonable Turing tape, this digraph should satisfy a few restrictions.
	\begin{enumerate}
		\item Uniqueness of outgoing colors - From any vertex, there should be exactly one outgoing edge of each color.  Since the mechanism by which the tape head moves is encoded in the edge color, outgoing edges should have different colors.  Also, since the transition function is independent of the tape cell, the collection of colors going out from each vertex should be the same.
		\item Homogeneity - Every vertex should ``look like" every other vertex.  Technically, this means that the subgroup of the automorphism group of the graph that preserves edge colors should be vertex transitive.  This is an extension of the assumption that all tape cells are indistinguishable, in this case by the local geometry.
		\item Infinity - The tape should have an infinite number of cells.  Otherwise, it's just a finite automaton.
		\item Connectedness - As the head moves during the course of a computation, it remains on a single connected component.  Inaccessible states are useless, so we can require that every tape cell be accessible from the starting point.  In particular, this means that the graph is connected.
		\item Finitely many colors - The transition function of the TM should be finite, so there should only be finitely many outgoing colors.  Having more colors doesn't change the computational power, since only finitely many of them could be referenced by the transition function anyway.
		\item Backtracking - The Turing machine should be able to return to the cell it just came from.  This assumption is less essential, since in view of homogeneity, any algorithm that called for returning to the previous tape cell could be replaced by a fixed sequence of steps.  However, many algorithms call for the head to return to the previous cell and forcing the head to do so by a circuitous route seems unduly harsh.  Note that in view of homogeneity, the color of an edge determines the color of the reverse edge.
	\end{enumerate}

	Restrictions~1 and~2 imply that our tape is a Cayley graph, restrictions~3,~4, and~5 make it the Cayley graph of a finitely generated infinite group, and restriction~6 forces the generating set to be closed under inverses.  In addition, any Cayley graph of of a finitely generated infinite group with a generating set that is closed under inverses satisfies~1--6.  This suggests that Cayley graphs are, in some sense, the ``right" degree of generality and leads to the following definition:

	\begin{definition}
		Let $G$ be an infinite group and $S\subset G$ be a finite generating set for $G$ that is closed under inverses.  Then the Cayley graph of $G$ generated by $S$ is called the {\em tape graph}, $(G,S)$.
	\end{definition}

	Using this general type of tape, we can then ask questions about the structure of Turing Machines with tapes given by assorted groups and generating sets.

\section{Turing Machines on Cayley Graphs}
	\begin{definition}
		Let $G=\langle g_1,\ldots,g_n\rangle$ be a finitely generated group with the set $\{g_1,\ldots,g_n\}$ closed under inverses.  Then a Turing Machine over $G$ with generating set $\langle g_1,\ldots, g_n\rangle$ is a 7-tuple, $(Q,\Gamma,b,\Sigma,\delta,q_0,F)$ where
		\begin{itemize}
			\item $Q$ is the finite set of states
			\item $\Gamma$ is the finite set of tape symbols
			\item $b\in\Gamma$ is a designated blank symbol
			\item $\Sigma\subset\Gamma\backslash\{b\}$ is the set of input symbols
			\item $\delta:Q\times\Gamma\rightarrow Q\times\Gamma\times\{g_1,\ldots,g_n\}$ is the transition function
			\item $q_0\in Q$ is the initial state
			\item $F\subset Q$ is the set of terminal states (typically one to accept and one to reject)
		\end{itemize}
	\end{definition}
	This definition varies from the standard definition only in the interpretation of the transition function.  Whereas a standard TM has a two-way infinite one-dimensional tape and the transition function includes instructions for moving left or right, a TM over $G$ has as a tape the Cayley graph of $G$ and the transition function has instructions for moving along edges labeled by a particular generator.  For example, a Turing Machine over $\Z$ with generating set $\{-1,+1\}$ is a standard one-dimensional TM and a TM over $\Z^2$ with generating set $\{(0,-1),(-1,0),(0,1),(1,0)\}$ is a standard two-dimensional TM.

	We have intentionally skipped the notion of how to provide input for machines of this type.  Most generally, we could insist that the initial set of non-blank tape cells be connected and contain the initial location of the tape head.  However, we really intend these machines to work like Turing machines, and therefore, to compute on strings of symbols.  It turns out that there will be a canonical way to lay out strings on the Cayley graph, but we need some machinery first.

\subsection{A Well-ordering in Trees}\label{treeOrdering}
	We shall turn aside from the main topic for a moment to discuss a general statement about trees.  There are many ways to define an order on the vertices of a tree, but we are going to be interested in the lexicographic order.  In general, lexicographic orderings on trees have few nice properties, but we show that finitely branching trees have a subtree where the lexicographic order is in fact, a well-order.

	First, some definitions.  Let $T$ be a finitely branching tree.  Denote by $[T]$ the set of all infinite paths through $T$ and by $\sqsubset$ the partial ordering on vertices induced by the tree.  Our convention will be that $v\sqsubset w$ means that $v$ is closer to the root than $w$.

	In order for the lexicographic ordering on $T$ to even make sense, we must have a linear order on the set of children of each node.  Denote the order on the children of $v\in T$ by $<_v$.  Then the lexicographic order, $<$ is defined as follows:
	\begin{itemize}
		\item If $v\sqsubset w$, then $v<w$.
		\item If neither $v\sqsubset w$ nor $w\sqsubset v$, let $u$ be the greatest lower bound of $v$ and $w$ according to $\sqsubset$ and let $u'\sqsubset v$ and $u''\sqsubset w$ be children of $u$.  Then $v<w\iff u'<_u u''$.
	\end{itemize}
	This order can, in fact, be extended to an order on $T\cup [T]$.  Identifying elements of $[T]$ with subsets of $T$ and elements of $T$ with one-element subsets of $T$, we can define
	\begin{equation*}
		x<y\iff (\exists w\in y)(\forall v\in x)v<w
	\end{equation*}

	Defined in this way, $<$ is a linear order, but we can't really hope for any more structure than that.  But, as promised, with a bit of pruning, we can find a subtree with much more structure.

	\begin{theorem}\label{thmWellOrder}
		Let $T$ be a finitely branching tree and let $<$ be the lexicographic ordering on the nodes of and paths through $T$ as given above. Define 
		\begin{equation*}
			T'=\{v\in T|\forall w\in[T],v<w\}
		\end{equation*}
		Then $<$ restricted to $T'$ is order isomorphic to an initial segment of $\omega$.  In addition, if $T$ is infinite, $T'$ is infinite as well.
	\end{theorem}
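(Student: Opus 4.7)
The plan is to construct a canonical ``leftmost'' infinite path $p^*$ through $T$ and show that $T'$ consists precisely of the nodes on $p^*$ together with those hanging off $p^*$ to the left, enumerated in depth-first pre-order.

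If $T$ is finite then $[T]=\emptyset$ and $T'=T$, which is already order-isomorphic to a finite initial segment of $\omega$, so assume henceforth that $T$ is infinite. First I would set $p_0$ to be the root and, inductively, $p_{i+1}$ to be the $<_{p_i}$-least child of $p_i$ whose subtree is infinite; K\"onig's lemma guarantees such a child exists since $T$ is finitely branching and the subtree at $p_i$ is infinite. The resulting path $p^* = (p_0, p_1, \ldots) \in [T]$ is lex-minimal: if $w \in [T]$ differs from $p^*$, let $p_k$ be their deepest common node and $q$ the child of $p_k$ on $w$; since $q$ has an infinite subtree and $q \neq p_{k+1}$, the choice of $p_{k+1}$ forces $p_{k+1} <_{p_k} q$. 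A short check then shows $q$ is lex-above every node of $p^*$, so $p^* < w$. Transitivity of $<$ on $T \cup [T]$ therefore identifies $T' = \{v \in T : v < p^*\}$.

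A case analysis on the branching of $v$ relative to $p^*$ shows $v < p^*$ iff $v$ lies on $p^*$ or $v$ lies in a subtree rooted at some child $c <_{p_i} p_{i+1}$ of a $p_i$. The crucial point is that each such ``left-hanging'' subtree must be \emph{finite}: otherwise K\"onig's lemma produces an infinite path through it, which when prepended by $p_0, \ldots, p_i$ gives an element of $[T]$ whose $(i{+}1)$-st node lies $<_{p_i} p_{i+1}$, contradicting the defining minimality of $p_{i+1}$. This is the step I expect to carry most of the weight of the proof, as it is where the careful construction of $p^*$ pays off in constraining the shape of $T'$.

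Given this structure, the lex order on $T'$ is precisely depth-first pre-order: one visits $p_0$, then (in $<_{p_0}$-order) each of the finitely many finite left-hanging subtrees at $p_0$, then $p_1$, then its left-hanging subtrees, and so on. Only finitely many nodes sit between consecutive $p_i$, so every $v \in T'$ has only finitely many $T'$-predecessors, and this enumeration is the required order-isomorphism between $T'$ and an initial segment of $\omega$. Finally, $\{p_i : i \ge 0\} \subseteq T'$ witnesses that $T'$ is infinite whenever $T$ is.
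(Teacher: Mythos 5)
Your proposal is correct, and it shares the paper's key building block---the greedy construction of the lex-minimal infinite path (your $p^*$ is the paper's $P=\{s^{(n)}(v_0)\}_{n\in\N}$ from Lemma~\ref{lmMinPath})---but the two arguments deploy it quite differently. The paper proves the well-ordering claim abstractly: it shows every $v\in T'$ has only finitely many $<$-predecessors by supposing otherwise, observing that the predecessor set $S=\{w\in T'\mid w<v\}$ is then an infinite subtree, extracting an infinite path $P$ from it by K\"onig's Lemma, and deriving the contradiction $P<v<P$; the minimal path enters only afterwards, to witness that $T'$ is infinite. You instead make $p^*$ the backbone of the entire argument: you characterize $T'$ exactly as the nodes on $p^*$ together with the subtrees hanging off it to the left, prove each such subtree is finite (again via K\"onig, using the minimality in the choice of $p_{i+1}$), and read off the order-isomorphism with an initial segment of $\omega$ as an explicit depth-first enumeration. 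Your route is more constructive---it says precisely what $T'$ is and where each node sits---at the cost of a three-way case analysis (on $p^*$, left of $p^*$, right of $p^*$) that the paper's short contradiction argument sidesteps. Two details you should make explicit when writing this up: the identification $T'=\{v\in T\mid v<p^*\}$ relies on transitivity of $<$ on $T\cup[T]$ together with $p^*$ being the $<$-minimum of $[T]$ (you gesture at both, and the paper asserts linearity of $<$ without proof, so you are entitled to it); and your choice of $p_{i+1}$ as the least child with \emph{infinite subtree} agrees with the paper's ``least child lying on some infinite path'' only because $T$ is finitely branching, which is exactly another application of K\"onig's Lemma.
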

	\begin{proof}
		This follows from the following direct result of K\"onig's Lemma.
		\begin{lemma}
			 Every element of $T'$ has only finitely many $<$-predecessors.
		\end{lemma}
		\begin{proof}
			Suppose $v\in T'$ had infinitely many $<$-predecessors.  Then we could form the tree,
			\begin{equation*}
				S=\{w\in T'|w<v\}
			\end{equation*}
			This is, in fact, a tree since $T'$ is a tree and $x\sqsubset y$ implies $x<y$.  Since $v$ has infinitely many $<$-predecessors, $S$ is infinite.

			By K\"onig's Lemma, there must be a path through $S$, call it $P$.  But $T'$ is a subtree of $T$ so $P\in [T]$.  By definition of $<$, $P<v$, but $v\in T'$ so $v<P$.  This is a contradiction, so $v$ must have only finitely many predecessors.
		\end{proof}
		Any linear order in which every element has only finitely many predecessors clearly cannot have an infinite descending chain, so must be a well-order.  As $\omega+1$ has an element with infinitely many predecessors, the order type must be an initial segment of $\omega$.

		For the second part of Theorem~\ref{thmWellOrder}, we need an additional lemma.
		\begin{lemma}\label{lmMinPath}
			If $[T]$ is non-empty, then $[T]$ has a minimal element in the $<$ ordering.
		\end{lemma}
		\begin{proof}
			We can inductively construct the minimal element of $[T]$.  For any $v\in T$, define $s(v)$ to be the minimal (according to $<_v$) child of $v$ that is a member of some element of $[T]$ if such a vertex exists.  Note that if there is a path through $v$, $s(v)$ is defined and there is a path through $s(v)$.  If $v_0$ is the root, then $P=\{s^{(n)}(v_0)\}_{n\in\N}$ is the desired minimal element.

			Since $[T]$ is non-empty, there is a path through the root and so, by induction $s^{(n)}(v_0)$ is defined for all $n$.  Therefore $P$ is indeed a path.  

			To see that $P$ is minimal, let $P\neq P'\in[T]$.  Let $v=s^{(m)}(v_0)$ be the largest element (according to $\sqsubset$) of $P\cap P'$ and let $w\in P'$ be a child of $v$.  By maximality of $v$, $w\neq s(v)$ and by construction, $s(v)\leq_v w$.  Therefore, $s(v)<_v w$.  By the lexicographic ordering, $w$ is greater than all descendants of $s(v)$ and also greater than all ancestors of $s(v)$ (since ancestors of $s(v)$ are also ancestors of $w$).  Therefore, $w>u$ for all $u\in P$ and $P'>P$.
		\end{proof}

		Now, let $T$ be infinite.  Then, by K\"onig's Lemma again, $[T]$ is non-empty.  Let $P$ be the minimal path in $[T]$ according to Lemma~\ref{lmMinPath}.  Then $P\subset T'$ since for any $v\in P$ and $P'\in [T]$, $v<P\leq P'$.  $P$ is infinite, so $T'$ is infinite as well.
	\end{proof}

\subsection{Power of Turing Machines on Cayley Graphs}
	One of the first questions to be asked about any new model of computation is whether the class of functions computable by the new model is different from the class of computable functions.  For Turing machines on Cayley graphs, this depends rather sensitively on properties of the group producing the tape graph.  For example,
	\begin{lemma}\label{lemWordProblem}
		Let $(G,S)$ be a tape graph.  There is a Turing machine over $(G,S)$ that can solve the word problem for $G$.
	\end{lemma}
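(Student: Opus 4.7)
The plan rests on a single observation built into the definition: if the TM's head sits on the cell corresponding to the group element $g$ and executes the transition labeled $g_i$, it moves to the cell $g\cdot g_i$. Consequently, starting at the identity cell and executing the sequence of moves $s_{i_1},s_{i_2},\ldots,s_{i_k}$ brings the head to the cell $s_{i_1}s_{i_2}\cdots s_{i_k}$, which coincides with the starting cell exactly when the input word represents the identity of $G$. The word problem is thus reduced to simulating a prescribed walk and checking whether it returns to its starting point.

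The machine would proceed in three stages. First, it marks its initial cell with a distinguished ``origin'' symbol. Second, it simulates the walk specified by the input, consuming one generator at a time. Third, it checks whether the cell it has landed on still carries the origin marker, accepting if so and rejecting otherwise. All the substance is in the second stage, which requires the machine to alternate between reading the next input symbol and advancing the simulated walk; since the single head cannot be at both cells at once, some form of navigation between them is required.

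I would implement this navigation by a retrace strategy. On a reserved region of the tape the TM maintains an auxiliary sequence recording the generators executed so far in the simulated walk. To perform one walk step it first traverses the recorded sequence in reverse, moving by the inverse generators (legal by the closure of $S$ under inverses, restriction~6), to return to the origin; then it navigates along the canonical input path to the next unread input cell and reads its generator $s$; then it traverses the recorded sequence forward once more, followed by the new move $s$, ending at the new walk position; and finally it appends $s$ to the record. Between these sub-phases only a bounded amount of information needs to be carried in the finite control.

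The main obstacle is purely logistical: since the tape is a single Cayley graph rather than a collection of separate tapes, the input layout, the walk-record layout, and the origin marker must coexist without collision. I would handle this by using the canonical well-ordered layout from Section~\ref{treeOrdering} to allocate disjoint regions to the input and to the scratch record, and by choosing distinct marker symbols for each role. Once these allocations are fixed, every sub-phase is well-defined, the simulation halts after finitely many steps on every input, and its accept/reject output decides the word problem for $G$.
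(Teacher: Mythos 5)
Your high-level reduction is sound and matches the paper's in spirit: mark the origin, execute the walk dictated by the input word, and test whether the head returns to the marked cell (the paper phrases the word problem with two words and a marked endpoint, but that is the same idea). The gap is in the navigation mechanism. You store the walk record ``on a reserved region of the tape'' disjoint from the walk, and then ask the machine to ``traverse the recorded sequence in reverse'' starting from the current walk endpoint. But the walk endpoint is an a priori unknown cell of the Cayley graph, the record lives in a different region, and the head is a single head with finite control: to apply $s_{i_j}^{-1}$ it must be at the walk position, while to learn what $s_{i_j}$ is it must be at the record region, and it cannot memorize the unboundedly long record to bridge the two. The same objection kills the forward replay (executing $s_{i_1},\ldots,s_{i_j}$ from the origin requires fetching each $s_{i_\ell}$ from the record region mid-walk). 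So the very sub-phase that your construction leans on is not implementable as described, and your claim that ``only a bounded amount of information needs to be carried in the finite control'' between sub-phases is exactly where it breaks.

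The missing idea --- and the paper's one real trick --- is to distribute the return data along the walk itself: as the head executes each generator, it writes into the newly reached cell a back-pointer (the generator leading to its predecessor), so that returning to the marked origin only ever requires reading the \emph{current} cell. This is local information and needs no reserved record region at all. (One caveat even then: if the walk revisits a cell, pointers must not be overwritten, so that they form a tree rooted at the origin.) Alternatively, if you adopt the paper's other formalization of input --- an auxiliary read-only one-way input tape --- your three-stage plan works with no retracing whatsoever: the input head advances on its own tape, the graph head just walks, and the final check reads the origin marker. Either repair yields a correct proof; as written, the retrace strategy does not.
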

	This will not be proved rigorously, since we have not yet defined how input is to be provided, but we will provide an argument that can be made rigorous in an obvious fashion by the end of this section.

	Given two sequences of generators, the machine can simply follow the first sequence of generators, leaving a pointer at each cell along the way pointing to its predecessor.  Marking the end, it can follow the sequence of pointers back to the origin.  Now, it can follow the second sequence of generators and check to see whether the end point was marked in the first step.  Clearly, if this algorithm ends on the marked cell, the two sequences of generators correspond to the same group element.

	Boone and Novikov \cite{Boone1958,Novikov1958} independently showed that there exist groups with undecidable word problems, so this leads us to believe that Turing machines on a given group with unsolvable word problem are strictly more powerful than standard Turing machines.  However, this requires that Turing machines over said group also be able to compute all computable functions.  Fortunately, this is the case.

	\begin{theorem}\label{thmCayleySimulation}
		Let
		\begin{equation*}
			M=(Q_M,\Gamma_M,b_M,\Sigma_M,\delta_M,{q_0}_M,F_M)
		\end{equation*}
		be a standard one-dimensional one-tape Turing Machine and let $(G,S)$ be a tape graph.  Then there is a Turing Machine over $(G,S)$ that can simulate $M$.
	\end{theorem}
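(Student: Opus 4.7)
The plan is to embed $M$'s two-way linear tape along an infinite non-self-intersecting path $u_0=e, u_1, u_2, \ldots$ in $(G,S)$, with $u_k$ holding the symbol at $M$'s cell $k$. Such a path is produced by Theorem~\ref{thmWellOrder}, and the main work is to show that a Turing machine over $(G,S)$ can build and navigate the path step by step while simulating $M$.

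For existence, I would fix a linear order on $S$ and let $T$ be the tree whose nodes are non-self-intersecting walks from $e$ in $(G,S)$, whose children are one-generator extensions to fresh vertices, and whose children at each node are ordered by the chosen order on $S$. Then $T$ is finitely branching (since $|S|<\infty$) and infinite (since an infinite, connected, locally finite graph admits simple walks of every finite length). Lemma~\ref{lmMinPath} then yields a $<$-minimum branch $P\in[T]$, whose successive endpoints give the desired path $u_0,u_1,u_2,\ldots$ in $(G,S)$.

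The simulating machine $M'$ will have tape alphabet $\Gamma_M\cup\Gamma'$, where $\Gamma'$ adds markers recording, at each committed cell $u_k$, the generator by which the path entered it (so leftward moves apply the inverse generator), together with tags for cells temporarily serving as scratch during extension. Simulating one step of $M$ at $u_k$ then consists of reading the $\Gamma_M$-component, consulting $\delta_M$, writing the new symbol in place, and moving on the virtual tape: leftward is immediate, while rightward requires $M'$ to produce $u_{k+1}$.

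The hard part is this rightward extension, because computing $u_{k+1}$ from the definition of $P$ appears to demand a test of which candidate $u_k g$ admits an infinite extension in $T$, a property that is not decidable on an arbitrary tape graph. My plan is to avoid that test with a deterministic backtracking scheme: try the generators at $u_k$ in order, tentatively commit to the first candidate whose image is off the committed path, and, if a later rightward move ever discovers every neighbor already on the path, retreat---erase the scratch markings from that excursion, uncommit the failed cells, advance to the next generator at $u_k$, and restart the simulation of $M$ from its initial configuration under the revised path. Because $P$ is the lex-minimum infinite branch, every candidate that is eventually retreated from corresponds to a finite pocket in $T$ below $u_k$, so each abandoned excursion is exhausted in finitely many steps before the retreat fires. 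The remainder is careful but routine bookkeeping: each off-path cell is tagged by the retreat epoch in which it was touched, and consistent re-simulation of $M$ across retreats follows from the determinism of $\delta_M$. Correctness of the whole simulation then follows from the minimality of $P$, which guarantees that the sequence of commitments produced by this procedure converges, prefix by prefix, to $u_0,u_1,u_2,\ldots$.
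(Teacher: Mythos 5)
Your proposal is sound in outline but takes a genuinely different route from the paper. Both arguments start from the same combinatorial object --- the finitely branching tree $T$ of non-self-intersecting walks from $e$, with children ordered by a fixed order on $S$ --- and both lean on Section~\ref{treeOrdering} for the guarantee that an infinite structure is always available to fall back on. The divergence is in how the tape is laid out. You lay it along a single branch: cell $k$ is the $k$th vertex of the (in general non-computable) lex-minimal branch $P$, approximated on line by a demand-driven depth-first search with revocation and restarts. The paper instead lays the tape along the \emph{lexicographic well-ordering of the subtree} $T'$ (further pruned to one representative per group element): cell $n$ is the $n$th vertex of that subtree in the well-order, so consecutive tape cells need not be adjacent in the Cayley graph and a single simulated step may require a walk up and down the tree. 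What the paper's choice buys is monotonicity: when a candidate new vertex turns out to be already in the tree, the machine merely marks that edge dead and tries the next one; no committed cell is ever revoked and the simulation of $M$ is never rewound. Your choice buys a genuinely linear tape (adjacent cells adjacent in the graph) at the price of un-committing cells and re-running $M$.

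Two points in your sketch are more than routine bookkeeping. First, ``advance to the next generator at $u_k$'' must cascade: all fresh neighbors of $u_k$ can be exhausted, in which case the machine must un-commit $u_k$ itself and resume the search at $u_{k-1}$, and so on downward. Only with this full depth-first cascade is it true that a candidate is permanently abandoned exactly when its subtree of $T$ is finite --- which is what the minimality of $P$ gives you, and what guarantees that each prefix of the committed path changes only finitely often and hence that the scheme converges. Second, ``restart the simulation of $M$ from its initial configuration'' requires recovering the input after committed cells carrying input symbols have been revoked and erased; you need either a read-only input track that survives retreats or an auxiliary read-only input tape (which the paper's formalism does make available). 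With those repairs the argument goes through, and correctness does not actually require the committed path to converge to $P$: if $M$ halts using $N$ cells, any stable non-self-intersecting walk of length $N$ suffices.
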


	The simulation itself is very straight-forward.  The only difficulty stems from the question of how to arrange the tape contents of the simulated machine on the Cayley graph.  If we could compute an infinite non-self-intersecting path through the Cayley graph, we could use this as a standard one-dimensional tape and do the simulation there.  However, as we will see in Section~\ref{secInescapable}, such a path need not exist.

	Fortunately, we can do the simulation anyway, in this case, by a variant of the ``always turn left" algorithm for solving mazes.  By putting an ordering on the generators of our tape graph, ``always turn left" becomes ``always follow the lexicographically minimal edge".  So, in an appropriate tree, Section~\ref{treeOrdering} gives us a well-ordered subtree where we can do the simulation with the $n$th vertex in the well-ordering storing the contents of the $n$th tape cell.

\subsection{Proof of Theorem \ref{thmCayleySimulation}}
	We will start with a description of the tree where we will store the tape of the simulated machine.  After we describe the operation of the machine, we will prove that this tree can be constructed on-line and navigated effectively.

	We will routinely use the natural correspondence between sequences of generators and group elements given by forming the product of the generators in the given sequence and evaluating in the group.  Henceforth, sequences and group elements will be used interchangeably.  Of course, multiple sequences will correspond to the same group element, but the sequence should always be clear from context.

	\begin{definition}
		Let $G$ be a group.  A {\em super-reduced word} is a finite sequence of elements of $G$ such that no subword, taken as a product in $G$ is equal to the identity.  More precisely, it is a sequence, $g_1,\ldots,g_n$ such that for all $1\leq i<j\leq n$, $\prod_{k=i}^jg_k\neq e$ in $G$.
	\end{definition}
	In the context of tape graphs, super-reduced words with symbols from the generating set correspond exactly to non-intersecting finite paths through the tape graph.  Note that a word is super-reduced if and only if all of its prefixes are super-reduced.

	Form the tree, $T$, of super-reduced words with symbols from $S$.  Since every group element has at least one super-reduced word corresponding to it and $G$ is infinite, $T$ is a spanning tree for the Cayley graph of $G$, hence, infinite.  Therefore, we can construct an infinite $T'$ as in Section~\ref{treeOrdering} where the lexicographic ordering is a well-ordering.  We will not do the computation on $T'$, but on a subtree, which we will call $R$.  To define $R$ we will want another definition.

	\begin{definition}
		We say that two sequences of generators, $v$ and $w$, are equivalent in $G$, or $v\equiv_G w$ if they correspond to the same group element.  In other words, $v\equiv_G w$ if $v=(s_1,\ldots,s_n)$, $w=(r_1,\ldots,r_m)$ and
		\begin{equation*}
			\prod_{i=1}^ns_i=\prod_{i=1}^mr_i\,\,\,\text{(in }G\text{)}
		\end{equation*}
	\end{definition}

	Now we can define $R$ as follows,
	\begin{equation*}
		R=\{v\in T'|(\forall w\in T')\,v\equiv_G w\implies v<w\}
	\end{equation*}
	That is, $R$ is the set of vertices in $T'$ that are lexicographically minimal among sequences that represent the same group element.

	It's not obvious at first glance, but $R$ is a tree.  Suppose to the contrary that $uv= w\in R$ but $u\notin R$.  Then there is some $u'<u$ in $R$ corresponding to the same group element.  Since we began with the tree of super-reduced words, $u$ and $u'$ are incomparable in the tree order.  Therefore, $u$ and $u'$ must differ at some first location.  So, $u'v\equiv_G w$ but $uv$ and $u'v$ differ for the first time at the same location and since $u'<u$, $u'v<uv=w$.  This is a contradiction since $w$ was supposed to be minimal among words corresponding to the same group element, so $R$ is indeed a tree.

	Also non-obvious is the fact that $R$ is infinite.  By the proof of Theorem~\ref{thmWellOrder}, $T'$ contains a path.  As was noted earlier, since we began with the tree of super-reduced words, elements that are comparable in the tree order cannot correspond to the same group element.  Therefore, the path in $T'$ corresponds to an infinite collection of group elements.  $R$ represents the same group elements since we only pruned redundant representations, so $R$ also represents an infinite number of group elements and is therefore infinite.

	Now for any tape graph, we have a tree that is well-ordered lexicographically, represents infinitely many group elements and represents each individual element at most once.  This is where we will do the simulation.

	Let $(G,S)$ be the tape graph given in the statement of the theorem.  Denote the elements of $S$ by $g_1,\ldots,g_n$.  It will be convenient to define a set $S'=S\cup\{g_0,g_{n+1}\}$ with the ordering $g_0<g_1<\cdots<g_n<g_{n+1}$.  We will consider both $g_0$ and $g_{n+1}$ equal to $e$ in the group for the purposes of moving from node to node.  Define a Turing Machine, $N$, over $(G,S)$ as follows:
	\begin{itemize}
		\item $Q_N=Q_M\sqcup(Q_M\times S')\sqcup(Q_M\times S')\sqcup(Q_M\times S)\sqcup(Q_M\times S)$
		\item $\Gamma_N=\Gamma_M\times S'\times\powerset(S)\times\powerset(S)$
		\item $b_N=(b_M,g_0,\emptyset,\emptyset)$
		\item $\Sigma_N=\Sigma_M\times S'\times\powerset(S)\times\powerset(S)$
		\item ${q_0}_N={q_0}_M$
		\item $F_N=F_M\times S'\times\powerset(S)\times\powerset(S)$
	\end{itemize}

	Each state in the tape alphabet will have an intended meaning.  Remember that we are going to do the computation on a tree, so we have to encode in each node not just the symbol of the simulated machine, but also auxiliary information about the structure of the tree.  In particular, if $(\gamma,\sigma,A,B)\in\Gamma_N$, $\gamma$ is the symbol of the simulated machine stored at the node, $\sigma$ is the generator to follow to reach the ancestor of this node in the tree, $A$ is the set of generators corresponding to edges pointing away from the root in the tree, and $B$ is the set of generators defining non-edges of the tree.  Remember that we are going to be constructing this tree on the fly and so we don't have complete information about which generators correspond to edges of the tree at every step.  Thus, elements of $S\backslash(A\cup B)$ are the edges whose membership in the tree has not yet been determined.

	We will also give each state a name and an intended interpretation,
	\begin{itemize}
		\item $Cq$ for $q\in Q_M$: We are simulating the computation of $M$ and the current state of $M$ is $q$.
		\item $Rqx$ for $q\in Q_M$ and $x\in S'$: The simulated tape head is moving to the right and $M$ is currently in state $q$.  The argument $x$ encodes the edge we followed to reach our current location.
		\item $Lqx$ for $q\in Q_M$ and $x\in S'$: The simulated tape head is moving to the left and $M$ is currently in state $q$.  The argument $x$ encodes the edge we followed to reach our current location.
		\item $Eqx$ for $q\in Q_M$ and $x\in S$: The simulated tape head is moving to the right and $M$ is in state $q$, but we have run out of tape and are attempting to extend the tree along edge $x$.
		\item $Bqx$ for $q\in Q_M$ and $x\in S$: $M$ is currently in state $q$ and we just failed to extend the tree along edge $x$, so we are backtracking.
	\end{itemize}
	Note that the starting state of $N$ is named $C{q_0}_M$.

	It will also be convenient to talk about the component functions of the transition function of $M$, \begin{eqnarray*}
		\delta_1:Q_M\times\Gamma_M&\rightarrow& Q_M\\
		\delta_2:Q_M\times\Gamma_M&\rightarrow&\Gamma_M\\
		\delta_3:Q_M\times\Gamma_M&\rightarrow&\{L,R\}
	\end{eqnarray*}

	We can now write down the action of the transition function.  If the current symbol being read is $(\gamma,\sigma,A,B)$, then the value of the transition function on each type of state is given in Table~\ref{tblSimTransition}.  Entries in the table are triples in the order, new state, symbol written, direction the tape head moves.
	\begin{table}[htbp]
		\begin{tabular}[htb]{l|c|c}
			$\digamma\in\Gamma_N$ & $\delta_N(\digamma,(\gamma,\sigma,A,B))$ \\ \hline
			$Cq$ & $\begin{cases}\left(R\delta_1(q,\gamma)g_0,(\delta_2(q,\gamma),\sigma,A,B),g_0\right)\text{ if }\delta_3(q,\gamma)=R\\ \left(L\delta_1(q,\gamma)\sigma,(\delta_2(q,\gamma),\sigma,A,B),\sigma\right)\text{ if }\delta_3(q,\gamma)=L\end{cases}$\\
			$Lqx$ & $\begin{cases}\left(Cq,(\gamma,\sigma,A,B),g_0\right)\text{ if }\forall y\in A,y\geq x\\ \left(Lqg_{n+1},(\gamma,\sigma,A,B),\max_{y\in A,y<x}y\right)\text{ otherwise}\end{cases}$\\
			$Rqx$ & $\begin{cases}\left(Cq,(\gamma,\sigma,A,B),\min_{y\in A,y>x}y\right)\text{ if }\exists y\in A,y>x\\ \left(Eqy,(\gamma,\sigma,A\cup\{y\},B),y\right)\text{ where }y=\min_{z\in S\backslash B,z>x}z\\ \qquad\qquad\qquad\qquad\qquad\qquad\qquad\qquad \text{ if }\exists z\in S\backslash B,z>x\\ \left(Rq\sigma,(\gamma,\sigma,A,B),\sigma\right)\text{ otherwise}\end{cases}$\\
			$Eqx$ & $\begin{cases}\left(Cq,(\gamma,x^{-1},\emptyset,\{x^{-1}\}),g_0\right)\text{ if }A=B=\emptyset\\ \left(Bqy,(\gamma,\sigma,A,B),x^{-1}\right)\text{ otherwise}\end{cases}$\\
			$Bqx$ & $\left(Rqx,(\gamma,\sigma,A\backslash\{x\},B\cup\{x\}),g_0\right)$
		\end{tabular}
		\caption{Transition Table for $N$}
		\label{tblSimTransition}
	\end{table}

	Most of these transitions are pretty opaque, so some explanation is warranted.

	If we are in a $C$-state, then we perform one step of the computation and then transition into either an $R$-state or and $L$-state depending on whether the computation tries to move left or right.  A leftward step in the simulated machine always begins with a step toward the root for the simulating machine, so we take that step immediately.  Rightward steps are more complicated, so we leave the tape head where it is on a rightward step.

	Taking a step to the left, we want to find the lexicographic immediate predecessor of our current vertex.  We begin by taking one step toward the root, which we did when we transitioned out of the $C$-state.  Then, either there is a branch all of whose elements are less than where we started, or not.  If not, then we are at the immediate predecessor of our origin, and we continue computing.  Otherwise, follow the greatest such branch and always move to the greatest child until we reach a dead-end.  This dead-end is the immediate predecessor we were looking for, so we can continue the computation.

	Taking a step to the right is significantly more complicated, as we are most likely going to have to extend the tree as we go.  If we have already built some tree above us ($A$ is non-empty), then we simply use that part of the tree, moving along the minimal edge in the tree and continuing the computation.  This is the reason for using the ``false" generator $g_0$ when we transition out of the $C$-state.  Otherwise, try to extend the tree along the least edge that we have not already ruled out.  We add that edge to $A$ and switch to an $E$-state.

	If $A$ and $B$ are both empty, then this is a new vertex that we have not visited before, so continue with the computation.  Otherwise, we are at a vertex that has already been added elsewhere in the tree.  So, we back up and transition to a $B$-state.  In the $B$-state, we rule out the edge we just took by removing it from $A$ and adding it to $B$ and switch to an $R$-state to try extending the tree again.

	If we can't extend the tree at all ($B=\{g_1,\ldots,g_n\}$), then take a step toward the root and try again, eschewing anything less than or equal to the edge we backtracked along.  Since there is an infinite tree for us to use, we will eventually be able to extend the tree.

	We are now in a position to be more explicit about input.  Take a machine with one tape given by a tape graph, $(G,S)$ and another, read-only one-way standard input tape.  Then, in view of the construction above, it is clear how this machine would transcribe its input from the standard tape onto the tape graph.  Once the transcription is done, the computation can proceed according to the above construction.  Thus, it seems reasonable to require the input to a machine be the result of a transcription of this type.  This does require some (potentially) non-recursive manipulation of a string to produce the appropriate input, but if this offends you, it is always possible to go back to the formalism with an auxiliary read-only input tape.

\subsection{An Alternative Characterization of the Turing Degrees}\label{secTDegrees}
	We have demonstrated that Turing Machines on arbitrary Cayley graphs are strictly more powerful than standard Turing Machines, so the next question to ask is, ``how much more powerful?"  The short answer is ``as powerful as we want".  In \cite{Boone1966a,Boone1966b} Boone showed that for any r.e. Turing degree, there is a finitely presented group whose word problem is complete for that degree.  Using such a group, we can produce a machine (more precisely a class of machines) that computes exactly the functions in or below the given degree.  More precisely,
	\begin{theorem}
		Let $T$ be an r.e. Turing degree.  There is a group, $G$, such that the class of functions computable by a Turing Machine over $G$ is exactly the class of functions in or below $T$.
	\end{theorem}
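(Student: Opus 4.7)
The plan is to invoke Boone's result \cite{Boone1966a,Boone1966b} to fix a finitely presented group $G$ whose word problem has Turing degree exactly $T$, choose a finite symmetric generating set $S$, and then establish the two required inclusions separately.

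For the upper bound---every function computable by a TM over $G$ lies in or below $T$---I would describe a simulation of an arbitrary TM $N$ over $(G,S)$ by a standard oracle Turing machine with oracle $O$ equal to the word problem of $G$. The simulator maintains the internal state of $N$, a sequence $w$ of generators recording the head's current location, and a finite list of pairs $(u_i,\gamma_i)$ giving the content $\gamma_i$ of each previously visited vertex $u_i$. At each step it queries $O$ on the concatenation $w u_i^{-1}$ for every $i$ to identify the current tape symbol (or to deduce that the cell is blank), then applies $\delta_N$ and appends the chosen generator to $w$. Only finitely many queries are made per step, so the entire computation is recursive in $O$, hence in $T$.

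For the lower bound---every function in or below $T$ is computable by a TM over $G$---I would fix an oracle machine $M^O$ witnessing that a given function $f$ is recursive in $T$, with $O$ taken to be the word problem of $G$. Theorem~\ref{thmCayleySimulation} already yields a TM over $(G,S)$ that simulates the action of any standard TM on the lexicographically well-ordered subtree $R$ of Section~\ref{treeOrdering}, and the algorithm sketched after Lemma~\ref{lemWordProblem} provides a TM-over-$G$ subroutine that, given two sequences of generators, decides whether they represent the same element of $G$. Replacing every oracle query of $M^O$ by an invocation of this subroutine produces a TM over $G$ that computes $f$.

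The main obstacle is the bookkeeping needed to interleave the word-problem subroutine with the simulation of Theorem~\ref{thmCayleySimulation}: that simulation already fills $R$ with the contents of the simulated tape, so the ``leave a pointer at each cell'' strategy underlying Lemma~\ref{lemWordProblem} must be executed on a disjoint scratch region without disturbing the main computation. This is handled routinely by enlarging $\Gamma_N$ with extra tracks for scratch data, or equivalently by passing to the multi-tape generalization mentioned in the introduction; once the scratch space is in place, correctness of the composite machine follows from the correctness of its two constituent subroutines, and the two inclusions together give the stated equality of function classes.
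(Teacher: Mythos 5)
Your proposal is correct and follows essentially the same route as the paper: fix Boone's group with word problem complete for $T$, get the lower bound by composing the simulation of Theorem~\ref{thmCayleySimulation} with the word-problem subroutine of Lemma~\ref{lemWordProblem}, and get the upper bound by having an oracle machine for the word problem track head positions as generator sequences and identify revisited cells via oracle queries. The extra bookkeeping you flag (scratch tracks for the word-problem subroutine) is a reasonable refinement of details the paper leaves implicit.
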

	\begin{proof}
		By Boone, let $G$ be a group whose word problem is complete for $T$ and let $f$ be a function in or below $T$.  Since $f\leq T$, $f$ is Turing reducible to the word problem for $G$.  Turing machines over $G$ can perform this reduction since they can compute all recursive functions and they can solve the word problem for $G$ by Lemma~\ref{lemWordProblem}.  Therefore, they can compute $f$.

		To show that a Turing Machine over $G$ cannot compute a class larger than $T$, observe that a Turing Machine with an oracle for the word problem for $G$ can easily simulate a Turing Machine over $G$.  It simply maintains a list of nodes written as a sequence of generators for the address together with whichever tape symbol is written there.  When the simulated tape head moves, the machine simply appends the generator to the current address and consults the oracle to determine which node this corresponds to, adding a new entry if the new node isn't in the list.
	\end{proof}

\section{Inescapable Groups}\label{secInescapable}
\subsection{Construction of an Inescapable Group}
	Theorem~\ref{thmCayleySimulation} raises the question of whether or not a Turing Machine can always walk off to infinity on the Cayley graph of an infinite group without retracing its steps.  This suggests the following definition:
	\begin{definition}
		An {\em inescapable group} is a tape graph, $(G,S)$, such that any infinite computable sequence, $s$, of elements from $S$ corresponds to a self-intersecting path.
	\end{definition}

	This definition leads to a number of questions.  Do such things exist?  What do they look like?  Is inescapability a group invariant?  Very little is known in response.  For example, as we will see in Section~\ref{secBurnside}, an inescapable group must be a Burnside group; that is, every element must have finite order.  The purpose of this section is to answer the more fundamental question of whether or not inescapable groups exist.

\subsubsection{Definitions}
	We will need to make a few definitions.  Let $A$ be a finite set.  Let $A^*=A^{<\omega}$ be the set of all finite sequences with elements from $A$, called words over $A$.  Let $\epsilon$ denote the empty word and let $A^+=A^*\backslash\{\epsilon\}$ be the set of non-empty words.  If $w\in A^*$, denote by $|w|$ the length of~$w$, by $w(i)$ the $i$th symbol in~$w$ (indexed from~0), and by $w(i,j)$ the word $w(i)w(i+1)\ldots w(j)$.  Also, let $A^{\leq n}=\{w\in A^*,|w|\leq n\}$ be the set of words of length no greater than~$n$.

	For $w,w'\in A^*$ define the following relations:
	\begin{itemize}
		\item $w'$ is a subword of $w$ if there exist $0\leq i,j<|w|$ such that $w'=w(i,j)$.
		\item $w'$ is a subsequence of $w$ if there exist $0\leq i_1<i_2<\ldots <i_{|w'|}<|w|$ such that $w'=w(i_1)w(i_2)\ldots w(i_{|w'|})$.
		\item $\#(w,w')=\left|\{x\in\N^{|w'|}|x_0<\ldots<x_{|w'|-1}\text{ and }w(x_i)=w'(i)\text{ for all }i\}\right|$ is the number of ways in which $w'$ is a subsequence of $w$.
	\end{itemize}
	Notice that $\epsilon$ is both a subword and subsequence of every word and that $\#(w,\epsilon)=1$ for all $w$.  As an example, note that while $aabbaa$ does not contain $aba$ as a subword, it does contain $aba$ as a subsequence in 8 different ways.

	We will say that an infinite sequence of symbols from $A$, call it $s$, is computable if there is a Turing Machine which, on input $n$, produces $s(n)$.

	When dealing with sets, we will use $\powerset(X)$ to denote the collection of all subsets of $X$ and $\powerset_r(X)$ to denote all subsets of $X$ of cardinality $r$.

	We will also be dealing with polynomials in a non-commutative polynomial ring, so let $f\in K\langle x_1,\ldots,x_d\rangle$ be a polynomial over a field, $K$, with non-commuting indeterminates.  We say that $f$ is homogeneous of degree~$n$ if $f$ is a $K$-linear combination of monomials of the form
	\begin{equation*}
		x_{i_1}^{n_1}x_{i_2}^{n_2}\cdots x_{i_k}^{n_k}
	\end{equation*}
	with $n_1+n_2+\ldots+n_k=n$.  In this case, we write $\partial(f)=n$.

\subsubsection{A Combinatorial Lemma}
	When we get to the actual construction, we are going to need to find a subsequence of each computable sequence that satisfies certain properties.  In particular, we need the following lemma.

	\begin{lemma}\label{lmEvenSubstrings}
		Let $A=\{x_1,\ldots,x_m\}$ and let $n\geq 1$.  There is some $C(m,n)$ such that for all words, $s$, over alphabet $A$ with $|s|\geq C(m,n)$, $s$ contains a non-empty subword, $w$, with the following property.  For each $w'\in A^{\leq n}\cap A^+$, $w$ contains $w'$ as a subsequence an even number of times.
	\end{lemma}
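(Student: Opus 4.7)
My plan is to encode the parities of all the subsequence counts in a finite group and then finish by a pigeonhole argument on the prefixes of $s$. Work in the $\F_2$-vector space $V$ with basis $\{e_{w'} : w' \in A^{\leq n}\}$ and, for each $x\in A$, define a linear operator $T_x\colon V\to V$ by $T_x(e_u) = e_u + e_{ux}$ when $|u|<n$ and $T_x(e_u)=e_u$ when $|u|=n$. A short induction on $|w|$ from the identities $\#(wx,w')=\#(w,w')$ when $w'$ does not end in $x$ and $\#(wx,w')=\#(w,w')+\#(w,u)$ when $w'=ux$ shows that the vector $v(w) := \sum_{w'\in A^{\leq n}}\#(w,w')\, e_{w'}$ obeys the update law $v(wx)=T_x\,v(w)$; thus $v(w)=R_w\,e_\epsilon$, where $R_w := T_{w(|w|-1)}\cdots T_{w(0)}$, and the correspondence $w\mapsto R_w$ is an antihomomorphism of monoids, $R_{uv}=R_v R_u$.

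Writing $T_x = I + N_x$ with $N_x(e_u)=e_{ux}$ (or $0$ if $|u|=n$), each $N_x$ strictly increases word length, so with respect to any total order of $A^{\leq n}$ refining length, every $T_x$ is unitriangular. The group $G := \langle T_{x_1},\ldots,T_{x_m}\rangle$ therefore lies inside the corresponding group of unitriangular matrices over $\F_2$, which is a finite $2$-group of order at most $2^{\binom{|A^{\leq n}|}{2}}$. I then take $C(m,n)$ to be any integer strictly larger than $|G|$; the $|s|+1$ prefix images $R_k := R_{s(0,k-1)}$ for $k=0,1,\ldots,|s|$ are then forced to include a repetition $R_i=R_j$ with $0\leq i<j\leq|s|$.

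Finally, since $R$ is an antihomomorphism, the factorisation $s(0,j-1) = s(0,i-1)\,s(i,j-1)$ gives $R_j = R_{s(i,j-1)}\,R_i$, and the equality $R_i = R_j$ in $G$ therefore forces $R_{s(i,j-1)}=I$. The non-empty subword $w := s(i,j-1)$ (of length $j-i\geq 1$) accordingly satisfies $v(w)=I\cdot e_\epsilon = e_\epsilon$, which is exactly the assertion that $\#(w,w')\equiv 0\pmod{2}$ for every non-empty $w'\in A^{\leq n}$.

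I expect the only real obstacle in writing this up cleanly to be the book-keeping for the recursion $v(wx)=T_x\,v(w)$ and the verification that the $N_x$ are jointly nilpotent with respect to a common order; once these are in hand, the conclusion is pure pigeonhole in a finite matrix group. (Note that the same argument over $\F_p$ would yield an analogous statement for congruences modulo any fixed prime $p$ in place of~$2$.)
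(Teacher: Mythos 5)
Your proof is correct, but it takes a genuinely different route from the paper's. The paper colours the non-empty subwords of $s$ by the parity vector $\phi(w)=\left(\#(w,w')\bmod 2\right)_{w'\in A^{\leq n}}$ and invokes Pirillo's Ramsey-type theorem on words to extract a factorisation $w_1w_2$ with $\phi(w_1)=\phi(w_2)=\phi(w_1w_2)$; an induction on $|w'|$ using the convolution identity $\#(w_1w_2,w')=\sum_i\#(w_1,w'(0,i-1))\,\#(w_2,w'(i,|w'|-1))$ then forces all these parity vectors to vanish, so its constant is the Ramsey number $R(2,3,2^{(m^{n+1}-1)/(m-1)})$. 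You instead linearise the same statistics: the parity vector evolves under the unitriangular transfer matrices $T_x$, the monoid they generate is a finite $2$-group, and pigeonhole on the $|s|+1$ prefixes produces two with equal images, whence the intervening block has identity transfer matrix. Your verifications are sound: the update law is exactly the standard recursion for subsequence counts, the truncation at length $n$ is legitimate because $\#(wx,ux)$ only consults strictly shorter subsequences, each $T_x=I+N_x$ is invertible by nilpotence of $N_x$, and the cancellation $R_{s(i,j-1)}=R_jR_i^{-1}=I$ is valid inside the finite group. What your approach buys: it avoids Ramsey theory entirely, it yields the explicit and vastly smaller bound $C(m,n)\leq 2^{\binom{D}{2}}+1$ with $D=|A^{\leq n}|=(m^{n+1}-1)/(m-1)$ (against a multicolour Ramsey number with $2^{D}$ colours), and, as you note, it relativises to any prime modulus. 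What the paper's argument buys is slightly more structure, namely that $w_1$, $w_2$, and $w_1w_2$ all have even counts, but that extra information is never used in the application to Theorem~\ref{thmInescGroup}, where only one non-empty subword of bounded length with even counts is needed, so nothing is lost by your route.
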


	In fact, the number $C(m,n)$ is a Ramsey number, $R(2,3,2^\frac{m^{n+1}-1}{m-1})$ where the function $R(r,k,n)$ is given in Ramsey's Theorem,
	\begin{theorem}[Ramsey]
		Let $r,k,n$ be positive integers with $1\leq r\leq k$.  Then there exists an integer, denoted $R(r,k,n)$, such that for each set $X$ with $|X|=R(r,k,n)$ and each partition of $\powerset_r(X)$, $Y_1,\ldots,Y_n$, there exists a $k$-element subset $Y$ of $X$ and a set $Y_i$ with $\powerset_r(Y)\subset Y_i$.
	\end{theorem}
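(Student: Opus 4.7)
The plan is to apply Ramsey's Theorem to a coloring of pairs of positions in $s$ by a parity signature of short subsequence counts, and then extract the desired subword as the middle piece of a monochromatic triple. Fix a word $s$ with $|s| \geq R\bigl(2,3,2^{(m^{n+1}-1)/(m-1)}\bigr)$ and take as ground set the $|s|+1$ positions $0,1,\ldots,|s|$. For each pair $\{i,j\}$ with $i<j$, let $w_{ij} := s(i,j-1)$ and assign it the color
\[
\bigl(\#(w_{ij},w') \bmod 2\bigr)_{w' \in A^{\leq n}} \in (\Z/2\Z)^{A^{\leq n}}.
\]
Since $|A^{\leq n}| = (m^{n+1}-1)/(m-1)$, there are at most $2^{(m^{n+1}-1)/(m-1)}$ possible colors, and Ramsey's Theorem supplies a monochromatic triple $i<j<k$. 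Writing $w_1 := s(i,j-1)$ and $w_2 := s(j,k-1)$, we have $w_1 w_2 = s(i,k-1)$, and monochromaticity becomes $\#(w_1,w') \equiv \#(w_2,w') \equiv \#(w_1 w_2,w') \pmod{2}$ for every $w' \in A^{\leq n}$.

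The algebraic engine of the argument is the convolution identity
\[
\#(w_1 w_2,w') \;=\; \sum_{w' = uv} \#(w_1,u)\,\#(w_2,v),
\]
in which the sum ranges over all factorizations of $w'$ as a (possibly empty) prefix $u$ followed by a (possibly empty) suffix $v$. This holds because any occurrence of $w'$ as a subsequence of $w_1 w_2$ is uniquely specified by the split between those characters landing in the $w_1$-portion and those landing in the $w_2$-portion.

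I claim that $w := w_2$ works; it is non-empty because $j<k$. To verify, I would show by induction on $|w'|$ that $\#(w_2,w') \equiv 0 \pmod{2}$ for every $w' \in A^{\leq n} \cap A^+$. In the base case $|w'|=1$, the identity degenerates to $\#(w_1 w_2, w') = \#(w_1,w') + \#(w_2,w')$, and the monochromatic condition forces $\#(w_2,w') \equiv 0 \pmod{2}$. For the inductive step, peeling off the two extreme factorizations $u = \epsilon$ and $v = \epsilon$ gives
\[
\#(w_1 w_2, w') \;=\; \#(w_1, w') + \#(w_2, w') + \sum_{\substack{w'=uv\\ u,v \neq \epsilon}} \#(w_1,u)\,\#(w_2,v),
\]
and each $v$ in the remaining sum is non-empty with $|v|<|w'|$, so $\#(w_2,v) \equiv 0 \pmod{2}$ by the inductive hypothesis; the tail sum vanishes mod $2$, and combining with $\#(w_1 w_2,w') \equiv \#(w_1,w') \pmod{2}$ yields $\#(w_2,w') \equiv 0 \pmod{2}$.

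The crux is choosing the right thing to color. A naive coloring of individual positions by a ``prefix parity signature'' fails because $\#(\cdot,w')$ is not additive across concatenations; the cross terms in the convolution are a genuine obstruction. Coloring unordered pairs by the signature of the subword they span and invoking Ramsey on \emph{triples} imposes parity agreement on $w_1$, $w_2$, and $w_1 w_2$ simultaneously, which is precisely the structure required to cancel the cross terms inductively.
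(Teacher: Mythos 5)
The statement you were asked to prove is Ramsey's Theorem itself---the existence of the numbers $R(r,k,n)$---which the paper quotes as a classical black box and never proves. Your argument does not address that existence at all: its pivotal step reads ``Ramsey's Theorem supplies a monochromatic triple,'' i.e., you invoke the instance $R\bigl(2,3,2^{(m^{n+1}-1)/(m-1)}\bigr)$ of the very assertion to be established. As a proof of the statement this is circular, and nothing in your write-up could be salvaged to show that any $R(r,k,n)$ is finite. A genuine proof needs an independent construction, for example the standard double induction: for $r=1$ the pigeonhole principle gives $R(1,k,n)=n(k-1)+1$; for the step from $r-1$ to $r$ one repeatedly selects a point $x$, colors each $(r-1)$-element set $Z$ of the remainder by the color of $Z\cup\{x\}$, and applies the inductive hypothesis to shrink the ground set while stabilizing colors, with a further induction on $n$ (or the two-color reduction) handling multiple color classes. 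No such machinery, or any substitute for it, appears in your proposal.

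What you actually proved---correctly, as far as I can check---is Lemma~\ref{lmEvenSubstrings}, the paper's \emph{application} of Ramsey's Theorem, and your route there is even a genuinely different one from the paper's: where the paper cites Pirillo's Theorem (Theorem~\ref{thmPirillo}) to obtain a subword $w_1w_2$ with $\phi(w_1)=\phi(w_2)=\phi(w_1w_2)$, you inline the $k=2$ case of Pirillo by coloring pairs of positions with the parity signature of the spanned subword and extracting a monochromatic triple; your convolution identity and induction on $|w'|$ then mirror, and arguably tidy up, the paper's parity computation, concluding that $w=w_2$ alone suffices rather than $w_1w_2$. That is a pleasant, self-contained treatment of the lemma, but it answers a different question: it sits strictly downstream of the theorem you were assigned, and as submitted it leaves that theorem unproved.
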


	To prove the lemma, we need a theorem from the combinatorial theory of words.
	\begin{theorem}[Pirillo \cite{Pirillo1983}]\label{thmPirillo}
		Let $\phi:A^+\rightarrow E$ be a mapping from $A^+$ to a set $E$ with $|E|=n$.  For each $k\geq 1$, each word $w\in A^+$ of length $R(2,k+1,n)$ contains a subword $w_1w_2\ldots w_k$ with $w_i\in A^+$ and 
		\begin{equation*}
			\phi(w(i,i'))=\phi(w(j,j'))
		\end{equation*}
		for all pairs $(i,i')$, $(j,j')$ ($1\leq i\leq i'\leq k$ and $1\leq j\leq j'\leq k$).
	\end{theorem}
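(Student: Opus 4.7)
The plan is to derive Pirillo's theorem as a direct application of Ramsey's theorem for pairs, using $\phi$ to induce a coloring of pairs of ``cut positions'' in $w$ and then reading off the factorization from a monochromatic subset.

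First I would index the $|w|+1$ cut positions between the symbols of $w$ by $0, 1, \ldots, |w|$, where cut $p$ lies just before character $w(p)$ (with cut $|w|$ at the end). Define an $n$-coloring $c$ of the set $\{(p,q) : 0 \leq p < q \leq |w|\}$ by $c(p,q) = \phi(w(p, q-1))$; this is well-defined since the subword between two distinct cuts is a nonempty word in $A^+$. Since $|w| \geq R(2, k+1, n)$, the cut set has at least $R(2, k+1, n)$ elements, so Ramsey's theorem (in the form given just before the statement) produces a monochromatic $(k+1)$-subset of cuts $p_0 < p_1 < \cdots < p_k$; call the common color $e \in E$.

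Next I would set $w_i := w(p_{i-1}, p_i - 1)$ for $1 \leq i \leq k$. Each $w_i$ is nonempty because $p_{i-1} < p_i$, and the concatenation $w_1 w_2 \cdots w_k$ equals the subword $w(p_0, p_k - 1)$ of $w$, so it is indeed a subword of $w$ in the sense required by the theorem. For any indices $1 \leq i \leq i' \leq k$, the concatenation $w_i w_{i+1} \cdots w_{i'}$ is exactly $w(p_{i-1}, p_{i'} - 1)$, and therefore its $\phi$-image is $c(p_{i-1}, p_{i'}) = e$ by monochromaticity. Hence $\phi$ takes the constant value $e$ on every such concatenation, which is the claimed equality $\phi(w(i,i')) = \phi(w(j,j'))$.

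There is no substantive obstacle once the coloring is set up correctly; the only care needed is the indexing convention. Coloring pairs of \emph{characters} rather than pairs of \emph{cuts} would force adjacent factors $w_i$ and $w_{i+1}$ to overlap at their endpoints and produce off-by-one errors; using cuts makes the identity $w_i w_{i+1} \cdots w_{i'} = w(p_{i-1}, p_{i'} - 1)$ immediate and cleanly covers the single-factor case $i = i'$, since consecutive cuts $(p_{i-1}, p_i)$ are themselves among the monochromatic pairs and therefore $\phi(w_i) = e$ as well.
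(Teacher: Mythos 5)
Your proof is correct, but note that the paper itself offers no proof of this statement to compare against: it is quoted as a known theorem of Pirillo \cite{Pirillo1983} and used as a black box. Your argument is in fact the standard Ramsey-theoretic proof of exactly this result, and the details check out: the $n$-coloring $c(p,q)=\phi(w(p,q-1))$ of pairs of cut positions is well-defined because $p<q$ guarantees a nonempty subword; the $|w|+1 \geq R(2,k+1,n)$ cuts supply a monochromatic $(k+1)$-set $p_0<p_1<\cdots<p_k$ via the version of Ramsey's theorem stated in the paper (with $r=2$ and $k+1\geq 2$ since $k\geq 1$); and the identity $w_iw_{i+1}\cdots w_{i'}=w(p_{i-1},p_{i'}-1)$ converts monochromaticity of pairs into constancy of $\phi$ on all concatenations of consecutive factors, with the single-factor case $i=i'$ covered because the pair $(p_{i-1},p_i)$ is itself monochromatic. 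Your closing remark about coloring cuts rather than character positions correctly flags the only place an off-by-one error could creep in, and your convention avoids it.
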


	\begin{proof}[Proof of Lemma \ref{lmEvenSubstrings}]
		Consider the function $\phi:A^+\rightarrow \Z_2^{A^{\leq n}}$ defined as follows:
		\begin{equation*}
			\left(\phi(w)\right)(w')=\#(w,w')\bmod{2}
		\end{equation*}
		Since
		\begin{equation*}
			\left|\Z_2^{A^{\leq n}}\right|=2^{\frac{m^{n+1}-1}{m-1}}
		\end{equation*}
		We can apply Theorem~\ref{thmPirillo} with $k=2$ to $s$ to get $w_1w_2$, a subword of $s$ such that
		\begin{equation*}
			\phi(w_1)=\phi(w_2)=\phi(w_1w_2)
		\end{equation*}
		Then the word $w_1w_2$ contains any non-empty sequence over $A$ of length no greater than $n$ as a subsequence an even number of times.

		We will prove something slightly stronger by induction on the length of the contained subsequence.  Specifically, we will show that not only does $w_1w_2$ satisfy the theorem, but so do $w_1$ and $w_2$ individually.

		For the base case, let $x_i$ be a subsequence of length one.  Then
		\begin{equation*}
			\phi(w_1w_2)(x_i)=\phi(w_1)(x_i)+\phi(w_2)(x_i)=2\phi(w_1)(x_i)=0\bmod{2}
		\end{equation*}
		since the number of occurrences of a single symbol in a concatenation of words is simply the sum of the number of occurrences in each factor.  Using the fact that $\phi(w_1w_2)=\phi(w_1)=\phi(w_2)$, we have established the base.

		The induction step isn't significantly more difficult; the only difficulty arises from the fact that the number of occurrences of a substring of length greater than one isn't additive.  However, we do have the formula,
		\begin{equation*}
			\#(w_1w_2,w')=\sum_{i=0}^{|w'|}\#(w_1,w'(0,i-1))\#(w_2,w'(i,|w'|-1))
		\end{equation*}
		We really only care about the parity of this expression and if we assume the induction hypothesis for all strings shorter than $w'$, most of the terms are even.  So by reducing,
		\begin{equation*}
			\#(w_1w_2,w')=\#(w_1,w')+\#(w_2,w')\bmod{2}
		\end{equation*}
		Now, in an argument analogous to the base case, we get 
		\begin{equation*}
			\phi(w_1w_2)=\phi(w_1)=\phi(w_2)=0\bmod{2}
		\end{equation*}
	\end{proof}

\subsubsection{The Golod-Shafarevich Theorem}
	The Golod-Shafarevich Theorem is a powerful tool from algebra that gives a sufficient condition for a particular quotient algebra to be infinite dimensional.  For our purposes, it is a tool that will ensure that as we start adding relations to a free group, we don't collapse the group to something finite.

	The power of the theorem comes from the fact that the criterion it presents is based only on the number of relations of certain types, and not on the relations themselves.  This gives us a large amount of freedom to choose the relations we want without having to worry about bad interactions between them.

	\begin{theorem}[Golod-Shafarevich \cite{GolodShafarevich}]\label{thmGS}
		Let $R_d=K\langle x_1,\ldots, x_d\rangle$ be the polynomial ring over a field, $K$, in the non-commuting indeterminates $x_1,\ldots,x_d$.  Let $f_1,f_2,\ldots\in F$ be a set of homogeneous polynomials of $R_d$, and let the number of polynomials of degree $i$ be $r_i$.  Let $2\leq\partial(f_i)\leq\partial(f_{i+1})$ and let $I$ be the ideal generated by $F$.  Let $R_d/I=A$.  If all the coefficients in the power series,
		\begin{equation*}
			\left(1-dt+\sum_{i=2}^\infty r_it^i\right)^{-1}
		\end{equation*}
		are non-negative, then $A$ is infinite dimensional.
	\end{theorem}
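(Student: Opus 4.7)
The plan is to exploit the grading on $R_d$. Since each $f_i$ is homogeneous, the ideal $I$ is graded and $A$ inherits a grading $A = \bigoplus_{n\geq 0} A_n$ with $a_n := \dim_K A_n$ finite. Writing $V = R_d^1$ for the $K$-span of $x_1,\ldots,x_d$, $H_A(t) := \sum_n a_n t^n$, and $R(t) := \sum_{i\geq 2} r_i t^i$, the strategy is to establish the coefficient-wise inequality $H_A(t)\,(1 - dt + R(t)) \geq 1$, then invert using the hypothesis on $(1 - dt + R(t))^{-1}$ to conclude that $H_A$ dominates a formal power series with infinitely many positive coefficients.

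The heart of the argument, and the step I expect to be the main obstacle, is the recursion
\begin{equation*}
a_n \;\geq\; d\,a_{n-1} \;-\; \sum_{e\geq 2} r_e\, a_{n-e} \qquad (n \geq 1).
\end{equation*}
I would derive this from the right-multiplication map $\mu : A_{n-1} \otimes_K V \to A_n$, which is surjective because every degree-$n$ monomial factors uniquely as a degree-$(n-1)$ monomial times some $x_j$. Writing $K^n := \ker \mu$ gives $\dim K^n = d\,a_{n-1} - a_n$, so the bound reduces to showing $\dim K^n \leq \sum_e r_e\, a_{n-e}$. Using the canonical identification $R_d^n = R_d^{n-1}\cdot V$, one sees $K^n \cong I^n / (I^{n-1}\cdot V)$. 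For each relation $f_i$ of degree $e_i := \partial(f_i)$, write uniquely $f_i = \sum_j u_{ij}\, x_j$ with $u_{ij} \in R_d^{e_i - 1}$; then for any $p \in R_d^{n-e_i}$, the product $p f_i = \sum_j (p u_{ij})\, x_j$ represents a class in $K^n$. I would then verify: (i) modulo $I^{n-1}V$ every element of $I^n$ reduces to such a sum $p f_i$ --- given $u f_j v \in I^n$ with $v = v' x_l$ of positive degree, rewrite it as $(u f_j v')\cdot x_l \in I^{n-1}\cdot V$; and (ii) the class of $p f_i$ depends only on $\bar p \in A_{n - e_i}$, since $q f_i = \sum_j(q u_{ij}) x_j \in I^{n-1}\cdot V$ whenever $q \in I^{n - e_i}$. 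Together these yield a surjection $\bigoplus_i A_{n - e_i} \twoheadrightarrow K^n$, giving the stated bound.

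Regrouping by degree, the coefficient of $t^n$ in $H_A(t)(1 - dt + R(t))$ equals $a_n - d\, a_{n-1} + \sum_{e\geq 2} r_e\, a_{n-e}$, which the recursion shows is $\geq 0$ for $n \geq 1$; the constant term is $a_0 = 1$. Hence $H_A(t)(1 - dt + R(t)) \geq 1$ coefficient-wise. Since multiplying by a power series with non-negative coefficients preserves coefficient-wise inequalities, I may multiply by $(1 - dt + R(t))^{-1}$ (whose coefficients are non-negative by hypothesis) to obtain $H_A(t) \geq (1 - dt + R(t))^{-1}$ coefficient-wise.

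To conclude, note that $(1 - dt + R(t))^{-1}$ cannot be a polynomial: if it were, then $1 - dt + R(t)$ would be a unit in $K[t]$ and hence a constant, forcing $d = 0$ and all $r_i = 0$ (the trivial case, which the theorem tacitly excludes). Thus its coefficients are strictly positive in infinitely many degrees, and since each $a_n$ is a non-negative integer dominating a positive rational, $a_n \geq 1$ for infinitely many $n$. Therefore $\dim_K A = \sum_n a_n = \infty$, as required.
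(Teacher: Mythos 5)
The paper does not prove this theorem; it is imported from the literature with a citation, so there is no in-house argument to compare yours against. Taken on its own terms, the core of your proposal is the standard Golod--Shafarevich argument and is carried out correctly: the identification $\ker\mu\cong I^n/(I^{n-1}\cdot V)$, the reduction of $u f_j v$ modulo $I^{n-1}\cdot V$ to terms $p f_j$, the well-definedness on classes $\bar p\in A_{n-e_j}$, the resulting surjection $\bigoplus_j A_{n-e_j}\twoheadrightarrow\ker\mu$, and the passage to the coefficient-wise inequality $H_A(t)\geq(1-dt+R(t))^{-1}$ are all sound.

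There is, however, a genuine gap in the final step. You argue that if $B(t):=(1-dt+R(t))^{-1}$ were a polynomial, then $1-dt+R(t)$ would be a unit in $K[t]$, hence constant. That inference presupposes that $1-dt+R(t)$ lies in $K[t]$, which the theorem does not grant: $F$ may contain relations in infinitely many degrees, so $R(t)$ is in general a genuine power series, and a power series can have a polynomial inverse without being a polynomial (e.g.\ $\sum_{n\geq 0}(-1)^nt^n$ inverts to $1+t$). This is not a corner case here --- the paper's own application of the theorem uses an infinite relation set with $r_i\in\{0,1\}$ for infinitely many $i$. The conclusion you want is still true for $d\geq 1$ and can be repaired combinatorially: the coefficients $b_n$ of $B$ are integers satisfying $b_0=1$ and $b_{m+1}=d\,b_m-\sum_e r_e b_{m+1-e}$ with all $b_n\geq 0$ by hypothesis. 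If $n_0\geq 1$ is minimal with $b_{n_0}=0$, then inductively $b_m=0$ and $r_e b_{m+1-e}=0$ for all $m\geq n_0$ and all $e$; but the identity at level $n_0$ gives $\sum_e r_e b_{n_0-e}=d\,b_{n_0-1}>0$, so some $e\geq 2$ has $r_e>0$ and $b_{n_0-e}>0$, whence $r_e b_{n_0+1-e}=0$ forces $b_{n_0+1-e}=0$ with $1\leq n_0+1-e<n_0$, contradicting minimality. Hence every $b_n\geq 1$ and $\dim_K A\geq\sum_n b_n=\infty$. With that substitution (and the explicit standing assumption $d\geq 1$, without which the statement is vacuous), your proof is complete.
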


	In a subsequent paper, Golod \cite{Golod1964} proves the following corollary,
	\begin{corollary}\label{corDimBound}
		In Theorem~\ref{thmGS}, if
		\begin{equation*}
			r_i\leq\epsilon^2(d-2\epsilon)^{i-2}
		\end{equation*}
		where $0<\epsilon<\frac{d}{2}$, then $A$ is infinite-dimensional.
	\end{corollary}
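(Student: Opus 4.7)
The plan is to reduce the corollary to Theorem~\ref{thmGS} by showing that whenever $r_i\leq\epsilon^2(d-2\epsilon)^{i-2}$ the power series $(1-dt+\sum r_i t^i)^{-1}$ has non-negative coefficients.

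First I would analyze the extremal series obtained by making the inequality an equality. Set
\begin{equation*}
Q(t)=1-dt+\sum_{i\geq 2}\epsilon^2(d-2\epsilon)^{i-2}t^i=1-dt+\frac{\epsilon^2 t^2}{1-(d-2\epsilon)t}.
\end{equation*}
A short algebraic simplification yields the polynomial identity $(1-dt)(1-(d-2\epsilon)t)+\epsilon^2 t^2=(1-(d-\epsilon)t)^2$, so
\begin{equation*}
Q(t)=\frac{(1-(d-\epsilon)t)^2}{1-(d-2\epsilon)t},\qquad \frac{1}{Q(t)}=\frac{1-(d-2\epsilon)t}{(1-(d-\epsilon)t)^2}.
\end{equation*}
Expanding via $(1-at)^{-2}=\sum_{n\geq 0}(n+1)a^n t^n$ gives $[t^n]\,1/Q=(d-\epsilon)^{n-1}(d+(n-1)\epsilon)$ for $n\geq 1$, which is strictly positive because $0<\epsilon<d/2$.

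Next I would transfer this positivity to the actual series $P(t)=1-dt+\sum r_i t^i$. Setting $R(t)=Q(t)-P(t)$, the coefficient bound says $R$ has non-negative coefficients and no constant term, so $R/Q$ is a well-defined formal power series vanishing at $0$, and the Neumann expansion
\begin{equation*}
\frac{1}{P(t)}=\frac{1}{Q(t)-R(t)}=\sum_{k\geq 0}\frac{R(t)^k}{Q(t)^{k+1}}
\end{equation*}
makes sense in the $t$-adic topology. Each summand is a product of series with non-negative coefficients, so $1/P$ does too, and Theorem~\ref{thmGS} delivers the conclusion.

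The main obstacle is the fact that inversion of power series is not monotone in the coefficients: one cannot pass from non-negativity of $1/Q$ to non-negativity of $1/P$ merely by observing $P\leq Q$ coefficient-wise, because the $-dt$ term interferes with any naive induction on the convolution recursion for the coefficients. The Neumann-series trick above sidesteps this by exploiting only that the perturbation $R=Q-P$ has non-negative coefficients and no constant term, and it is the one genuine idea in the argument; the closed-form factorization of $Q$ is a one-line algebraic check, and the expansion of $1/Q$ is routine.
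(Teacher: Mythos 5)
Your proof is correct, but there is nothing in the paper to compare it against: the paper states this corollary as a quoted result of Golod, citing \cite{Golod1964}, and supplies no proof of its own. Taken on its own merits, your argument is a complete and clean reduction to Theorem~\ref{thmGS}. The factorization $(1-dt)\bigl(1-(d-2\epsilon)t\bigr)+\epsilon^2t^2=\bigl(1-(d-\epsilon)t\bigr)^2$ checks out, as does the resulting coefficient formula $[t^n]\,Q^{-1}=(d-\epsilon)^{n-1}\bigl(d+(n-1)\epsilon\bigr)>0$ for $n\geq 1$ under $0<\epsilon<d/2$ (and note $d-2\epsilon>0$ is also what guarantees that the extremal bounds $\epsilon^2(d-2\epsilon)^{i-2}$ are non-negative, so that $R=Q-P$ has non-negative coefficients). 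You are also right to flag that coefficientwise domination $P\leq Q$ does not by itself transfer positivity of $Q^{-1}$ to $P^{-1}$; the Neumann expansion
\begin{equation*}
\frac{1}{P}=\sum_{k\geq 0}\frac{R^k}{Q^{k+1}},
\end{equation*}
valid $t$-adically because $R$ vanishes to order $2$ at $t=0$, is exactly the right device, since each summand is a product of series with non-negative coefficients. This is essentially the classical verification found in Golod's paper and its expositions, so you have independently reconstructed the standard argument; the one genuinely non-routine step, as you say, is the perturbation trick, and you have it right.
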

	For example, taking $d=2$ and $\epsilon=\frac{1}{4}$ in the corollary, we see that if $r_i\leq 2$ for all $i\geq 11$ and $r_i=0$ for all $i<11$, $A$ is infinite dimensional.

	Golod used this fact to establish the existence of a Burnside group, that is, an infinite group in which every element has finite order.  He did more than this, in fact, and produced an infinite $p$-group for each prime, $p$.  The diagonalization in Section~\ref{secDiag} will follow the same general ideas as Golod's construction as simplified for countable fields in Fischer and Struik \cite{FischerStruik}.

\subsubsection{The Construction}\label{secDiag}
	In Fischer and Struik \cite{FischerStruik}, there is a construction of a nil-algebra over finite and countable fields.  Although not expressly stated there, the construction is essentially a diagonalization over all polynomials.  In fact, even Golod's original construction of a nil-algebra can be viewed as a diagonalization over all polynomials, but the fact that the collection of all polynomials in the general case is uncountable makes it more difficult to see.

	The presentation given here will follow the construction in Fischer and Struik, since we will be diagonalizing against the set of computable sequences, which is countable.  Thus, we will use the more straightforward construction.

	\begin{theorem}\label{thmInescGroup}
		There exists an inescapable group.
	\end{theorem}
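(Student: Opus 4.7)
The plan is to construct an inescapable group via Golod-Shafarevich combined with a diagonalization against all computable sequences of generators. Fix $d$ large enough that Corollary~\ref{corDimBound} leaves room for many relations (e.g., $d = 2$ with $\epsilon = 1/4$ as in the worked example), and work in $R = \F\langle x_1, \ldots, x_d \rangle$ over $\F_2$ with augmentation ideal $J$. Following Golod, the goal is to build an ideal $I$ so that $A = R/I$ is nil and infinite-dimensional; the target group is then $G = 1 + J/I$ with $S = \{1 + x_i, (1+x_i)^{-1}\}_{i=1}^d$ as the $2d$-element closed-under-inverses generating set (the inverses exist in $A$ because it is nil).

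Enumerate the computable sequences over $S$ as $s^{(1)}, s^{(2)}, \ldots$ and the elements of $R$ as $p_1, p_2, \ldots$ (both countable since $\F_2$ is finite). I build $I$ by interleaving two kinds of homogeneous relations in nondecreasing degree. First, for each $k$ I pick a threshold $n_k$, read the initial segment of $s^{(k)}$ of length at least $C(|S|, n_k)$, and apply Lemma~\ref{lmEvenSubstrings} to extract a nonempty subword $w^{(k)}$ in which every nonempty subsequence of length $\leq n_k$ appears an even number of times. Letting $\iota$ send each generator to its algebra image, the polynomial
\[
P_k = \prod_{l \in w^{(k)}} \iota(s^{(k)}(l)) - 1
\]
has, over $\F_2$, no monomial of total degree $\leq n_k$ (each such coefficient is an even subsequence count), so each of its homogeneous components lies in degree $> n_k$; I adjoin all of them to $I$. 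Second, to force $A$ to be nil so that $G$ is a genuine group, I adjoin for each $p_j$ the homogeneous components of a high power $p_j^{m_j}$.

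By choosing the schedule of $n_k$ and $m_j$ to grow fast enough, the number of relations of each degree $i$ stays below the Golod-Shafarevich bound $\epsilon^2(d - 2\epsilon)^{i-2}$, so Corollary~\ref{corDimBound} makes $A$ infinite-dimensional and hence $G$ infinite. By construction, for every computable $s^{(k)}$ the image of the subword $w^{(k)}$ equals the identity in $G$, so the corresponding subpath closes up and $s^{(k)}$ self-intersects. The main obstacle is the scheduling itself: each $P_k$ contributes one relation to every degree between $n_k + 1$ and $|w^{(k)}|$, and the nil powers contribute across similar ranges, so keeping each $r_i$ under the threshold requires carefully interleaving the two enumerations while pushing $n_k$ and $m_j$ to infinity. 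A secondary technical point is handling inverse generators cleanly in the combinatorial step, since $(1+x_i)^{-1}$ is not a degree-one element of $A$; this can be absorbed either by treating $S$ as a $2d$-letter alphabet and verifying that the parity argument still cancels all low-degree contributions of the resulting expansion, or by first rewriting inverse letters as positive words using the nilpotency relations already present in $I$.
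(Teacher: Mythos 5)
Your strategy is the paper's: diagonalize against the countably many computable generator sequences by using Lemma~\ref{lmEvenSubstrings} to extract from each a subword whose expanded product contributes only high-degree homogeneous relations, then keep the count of relations per degree under the bound of Corollary~\ref{corDimBound} so the quotient algebra stays infinite-dimensional. The combinatorial core --- parity of subsequence counts killing every monomial of degree $\leq n_k$ in $\prod_l(1+a_l)-1$ --- is exactly right and matches the paper.

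The point you defer as a ``secondary technical point'' is, however, the one genuine gap, and it is where a naive implementation breaks down. The parity argument requires each letter of $S$ to be a \emph{fixed} polynomial with constant term $1$, determined before the ideal is built, since the subwords $w^{(k)}$ are words over the alphabet $S$ and must be converted to algebra elements letter by letter. In your setup the inverse letter $(1+x_i)^{-1}$ only acquires a polynomial representative once the nilpotency index of $x_i$ modulo $I$ is known, but $I$ is being constructed from those very representatives --- a circularity that neither of your two suggested fixes resolves as stated. The paper breaks it by decree: put $x_i^{16}$ into $F$ at the outset, so that over $\F_2$ one has $(1+x_i)^{16}=1+x_i^{16}=1$, and take the generating set to be $\{(1+x_i),(1+x_i)^{15}\}$ with the inverse letter an explicit degree-$15$ polynomial. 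This also renders your entire nil-algebra layer (adjoining high powers of \emph{every} ring element) unnecessary --- only the $2d$ generators need inverses, not all of $1+J/I$ --- which collapses the scheduling you identify as the main obstacle to: $d$ relations in degree $16$, at most one relation per degree thereafter, with sequence $s_i$ confined to the degree window $(r_i,r_{i+1}]$ via an explicit Ramsey-number recursion. Finally, ``$A$ infinite-dimensional, hence $G$ infinite'' is not immediate for the subgroup generated by $S$: the paper proves it by choosing, in each degree, a monomial outside the homogeneous ideal $(F)$ and checking that the corresponding products of the $(1+x_{i_j})$ are pairwise distinct because the top-degree homogeneous component of their difference survives modulo $(F)$. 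As written, with $G=1+J/I$, you would instead owe a proof that $S$ generates $G$, which it need not.
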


	\begin{proof}
		Let $d\geq 2$ and $A=\{x_1,\ldots,x_d\}$.  Consider the algebra, $\A=\F_2\langle A\rangle$ in non-commuting indeterminates, $x_i\in A$.  Let 
		\begin{equation*}
			S=\bigcup_{i=1}^d\{(1+x_i),(1+x_i)^{15}\}
		\end{equation*}
		and enumerate all c.e. sequences over $S$: $s_0,s_1,\ldots$.  We will use the elements of $S$ interchangeably as characters in an alphabet and as polynomials in $\A$.

		We will construct a set of homogeneous polynomials, $F$, such that the following conditions hold:
		\begin{itemize}
			\item $x_i^{16}\in F$ for $1\leq i\leq d$.  (This will ensure that $(1+x_i)$ has order~16 and is therefore multiplicatively invertible in the quotient algebra)
			\item For each $i\in\N$, $s_i$ has a subword, $w$ such that
				\begin{equation*}
					\left(\prod_{j=0}^{|w|-1}w(j)\right)-1
				\end{equation*}
				is in the ideal generated by elements of $F$.
			\item The number of elements of $F$ of degree $i$ is~0 for $i<16$, $d$ for $i=16$, and either~0 or~1 for every $i>16$.
		\end{itemize}

		Define the following sequence recursively:
		\begin{eqnarray*}
			r_0&=&16\\
			r_{n+1}&=&15\cdot R\left(2,3,2^{\frac{|S|^{r_{n}+1}-1}{|S|-1}}\right)
		\end{eqnarray*}
		We can now enumerate the elements of $F$ as follows.

		Start with $F=\{x_1^{16},\ldots,x_d^{16}\}$ and begin enumerating the elements of all $s_i$ in parallel.  If, at any point, we have enumerated a contiguous subsequence of elements of some $s_i$ of length $r_{i+1}$, call it $v$ and do the following.

		By Lemma~\ref{lmEvenSubstrings}, we can find a non-empty subword, $w$, of $v$ such that $|w|\leq \frac{1}{15}r_{i+1}$ and every sequence of length $\leq r_i$ occurs an even number of times in $w$.  Then, by multiplying out,
		\begin{equation*}
			p(x_1,\ldots,x_d):=\prod_{k=0}^{|w|-1}w(k)=\sum_{w'\in S^*}\#(w,w')\prod_{k=0}^{|w'|-1}(w'(k)-1)
		\end{equation*}
		Note that if $0<|w'|\leq r_i$, $\#(w,w')\equiv 0\bmod{2}$ and that the constant term of $p$ is~1.  Note also that $w'(k)-1$ always has constant term~0, so $p-1$ has no non-zero terms with degree $\leq r_i$.  Since $p$ is a polynomial, we can write $p-1$ as a sum of homogeneous components, $f_1,\ldots,f_m$.  Note that for all $1\leq i\leq m$, $r_i<\partial(f_i)\leq 15|w|\leq r_{i+1}$.  Add all of these to $F$, stop enumerating $s_i$ but continue enumerating everything else that hasn't been similarly halted.

		The $F$ that is so constructed clearly contains $x_i^{16}$ for $1\leq i\leq d$ and for each $s_i$, if $s_i$ is total, it has a subword whose corresponding product is equivalent to~1 mod the ideal generated by $F$.  Also, each $s_i$ only adds polynomials to $F$ that have degree in the interval $(r_i,r_{i+1}]$ and adds at most one polynomial of any given degree.  Since we start with $d$ polynomials of degree~16 and $r_0=16$, the number of elements of $F$ of degree $i$ must be~0 for all $i<16$ and either~0 or~1 for every $i>16$.

		We would then like to show that the multiplicative semigroup of $\A/(F)$ generated by elements of $S$, call it $G$, is an inescapable group.  First, note that by the binomial theorem,
		\begin{equation*}
			(1+x_i)(1+x_i)^{15}=1+x_i^{16}=1
		\end{equation*}
		so $G$ is a genuine group and $S$ is closed under inverses.  $S$ trivially generates $G$, so we only need $G$ to be infinite for $(G,S)$ to be a tape graph.

		By elementary calculus,
		\begin{equation*}
			2\leq d\leq\frac{1}{16}(d-.5)^{i-2}
		\end{equation*}
		for all $i\geq 11$.  Therefore, $F$ satisfies the hypotheses of Corollary~\ref{corDimBound} and $\A/(F)$ is infinite-dimensional.

		Now, for any positive integer, $d$, there must be a monomial of degree $d$ that does not lie in the ideal generated by $F$.  Otherwise, every monomial of degree~$\geq d$ would be in the ideal and the quotient algebra would be finite-dimensional.  So, consider two generic such monomials,
		\begin{equation*}
			x_{i_1}x_{i_2}\ldots x_{i_M}\text{ and }x_{j_1}x_{j_2}\ldots x_{j_N}
		\end{equation*}
		with $M>N$ and the group elements,
		\begin{equation*}
			u=(1+x_{i_1})(1+x_{i_2})\ldots(1+x_{i_M})\text{ and }v=(1+x_{j_1})(1+x_{j_2})\ldots(1+x_{j_N})
		\end{equation*}
		Then,
		\begin{equation*}
			u-v=x_{i_1}x_{i_2}\ldots x_{i_M}+\ldots
		\end{equation*}
		where the remaining terms all have degree $\leq M$.  Since the ideal $(F)$ is generated only by homogeneous polynomials, $u-v\in (F)$ if and only if every homogeneous component of $u-v$ is in $(F)$.  However, the degree $M$ component of $u-v$ is clearly not in $(F)$, so neither is $u-v$.  Therefore, $u$ and $v$ are different elements in $G$.

		Thus, we can find infinitely many distinct elements of $G$, one for each degree.  Therefore, $G$ is infinite and $(G,S)$ is a tape graph.

		In addition, any computable sequence of generators is one of the $s_i$, so we have ensured that it has a subword such that the corresponding product is equal to~1 in the quotient algebra.  This is the same as having product~1 in the group, so all computable sequences of generators must correspond to self-intersecting paths.
	\end{proof}

	The construction given above in fact does better than producing an inescapable group.  Since every step of the construction can be done recursively, the set of relations in the group is r.e.  It is a standard result that a group with an r.e. set of relations is recursively presentable, so the construction produces a {\em recursively presentable} inescapable group.  That being said, the question of whether there exists a finitely presentable inescapable group remains open.  It is also unlikely that the word problem for the group constructed above is solvable, so there also remains the question of whether there exists an inescapable group with solvable word problem.  It should also be noted that the only property of computable sequences that the construction used was that there are countably many of them.  Thus, the argument relativizes.  In particular, the construction will produce a group with no escape in Turing degree $T$, but with a presentation in $T$.

	Since the presentation is in $T$, the set of escapes is $\Pi^0_1$ in $T$.  You can see this by observing that if an infinite sequence has a self-intersection, we will eventually know about it.  Trivially, there are escapes, so by the low basis theorem, there must be a low escape.  So, for any Turing degree, we have a group with no escapes in or below the given degree, but with an escape low relative to it.

\subsection{Inescapable Groups are Burnside Groups}\label{secBurnside}
	It is clear that in an inescapable group, all generators must have finite order, but must every group element have finite order?  An infinite group in which every element has finite order is called a Burnside group and the existence of Burnside groups was an open problem for some time before E. Golod constructed one in 1964~\cite{GolodShafarevich,Golod1964}.

	So our question can be rephrased as, must every inescapable group be a Burnside group?  It turns out the answer is yes, but it's not as obvious as it appears.  Suppose your group did have an element of infinite order.  The obvious thing to do would be to write this element of infinite order as a product of generators and simply repeat that sequence to produce an escape.  This certainly gives a computable sequence of generators that hits infinitely many elements of the group, but there is no reason this is a {\em non-self-intersecting} path.  Fortunately, we can find some other element of infinite order and an expression of it as a product of generators such that this naive construction does give a non-self-intersecting path.

	The general idea is to start with the obvious construction and cut out the loops.  Since our original element has infinite order, the constructed walk can only return to a given point a fixed, finite number of times.  So, we wait at each point of the walk until it returns to our current position for the last time, then we follow for one step, and repeat.  The only difficulty is knowing when the last return will be.  Fortunately, this is invariant under shifting by our infinite order element, so we can just record it in a finite table indexed by the generators in the expression of our infinite order element.

\begin{theorem}\label{thmInescBurn}
	Let $G=\langle g_0,\ldots,g_n\rangle$ be a group and $a\in G$ have infinite order.  Then there exists $b=\prod_{j=0}^{k-1}{g_{i_j}}\in G$ such that $\prod_{j=0}^N g_{i_{j\bmod{k}}}$ is distinct for each $N$.
\end{theorem}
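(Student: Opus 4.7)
The plan is to begin with any expression of $a$ as a word in the generators, walk that word periodically, and trim away the loops in the resulting walk to extract a word whose infinite repetition is self-avoiding. Fix an expression $a = g_{l_0}g_{l_1}\cdots g_{l_{K-1}}$ and consider the walk $v\colon \N \to G$ given by $v(t) = \prod_{s<t} g_{l_{s \bmod K}}$, so that $v(qK+j) = a^q p_j$ where $p_j := g_{l_0}\cdots g_{l_{j-1}}$. The key preliminary observation is that each vertex is visited only finitely often: if $v(t) = v(t')$ then $a^{\lfloor t'/K\rfloor - \lfloor t/K\rfloor} = p_{t \bmod K}\, p_{t' \bmod K}^{-1}$, and because $a$ has infinite order this exponent is uniquely determined by the pair of cycle positions $(t \bmod K,\, t' \bmod K)$. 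Consequently, for each $j \in \{0,\ldots,K-1\}$ the quantity
\[
E(j) := \max\{t' - t : v(t') = v(t),\ t' \geq t,\ t \bmod K = j\}
\]
is finite and, after a brief case check, independent of the particular choice of $t$.

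Next I would construct the trimmed walk by always jumping past the last revisit of the current vertex. Set $t_0 = 0$ and $t_{i+1} = t_i + E(t_i \bmod K) + 1$; by definition of $E$, the index $t_{i+1}$ exceeds every time the original walk revisits $v(t_i)$, so the vertices $v(t_0), v(t_1), v(t_2),\ldots$ are pairwise distinct. The cycle positions $j_i := t_i \bmod K$ obey the deterministic recursion $j_{i+1} = (j_i + E(j_i) + 1) \bmod K$ on the finite set $\{0,\ldots,K-1\}$ and are therefore eventually periodic; choose $p$ and $k$ with $j_{i+k} = j_i$ for all $i \geq p$.

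Finally I would read off $b$ from this period. Over the $k$ trimmed steps between $t_p$ and $t_{p+k}$, the underlying walk index advances by $\sum_{i=p}^{p+k-1}(E(j_i)+1)$, which is strictly positive and, since $j_{p+k} = j_p$, divisible by $K$; writing the quotient as $Q \geq 1$ yields $v(t_{p+k}) = a^{Q} v(t_p)$. Define $b = g_{i_0}g_{i_1}\cdots g_{i_{k-1}}$, where $g_{i_s} := g_{l_{(t_{p+s+1}-1) \bmod K}}$ is the single generator carrying $v(t_{p+s})$ to $v(t_{p+s+1})$ in the original walk. Then $v(t_p)\cdot b = v(t_{p+k}) = a^{Q} v(t_p)$, so $b = v(t_p)^{-1} a^{Q} v(t_p)$ is a conjugate of $a^{Q}$ and in particular has infinite order. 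By vertex-transitivity of the Cayley graph, left-multiplication by $v(t_p)^{-1}$ translates the self-avoiding walk $v(t_p), v(t_{p+1}), v(t_{p+2}),\ldots$ into the walk at the identity following $b^{\infty}$, and hence $\prod_{j=0}^{N} g_{i_{j \bmod k}} = v(t_p)^{-1} v(t_{p+N+1})$ is distinct for each $N$.

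The main obstacle is verifying that $E(j)$ truly depends only on $j$; this is precisely where the infinite order of $a$ is essential, since with $a$ of finite order a single pair of cycle positions could give rise to unboundedly many revisit times and no uniform skip amount would exist. Once $E$ is established as a well-defined function on the finite set $\{0,\ldots,K-1\}$, the remaining steps reduce to pigeonhole on that set together with routine bookkeeping, and the chosen $b$ has length $k$ as required.
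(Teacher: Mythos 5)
Your proof is correct, and it implements the same underlying idea as the paper -- repeat a fixed word for $a$ forever and, at each vertex of the resulting walk, skip ahead past the last time the walk will ever return there -- but your formalization is genuinely different and in some respects cleaner. The paper works with a \emph{minimal-length} expression for $a$ and encodes the revisit data through relations $\delta(r,s,M)=e$; minimality is needed there to make the ``where do I land after the last return'' map $\gamma$ single-valued, and non-self-intersection of the trimmed walk then requires a separate inductive bookkeeping lemma (the exponents $k_n$) plus a two-case analysis. You avoid both: by measuring revisits by the absolute time difference $t'-t$ rather than by the power of $a$, the maximizer is automatically unique, so any expression for $a$ works; and because you retain the absolute times $t_i$, distinctness of $v(t_0),v(t_1),\ldots$ is immediate from the definition of $E$. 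The one place you are terse is the final step: the identity $\prod_{j=0}^{N}g_{i_{j\bmod k}}=v(t_p)^{-1}v(t_{p+N+1})$ for \emph{all} $N$ (not just $N<k$) needs the observation that the generator carrying $v(t_{p+j})$ to $v(t_{p+j+1})$ is $g_{l_{(t_{p+j+1}-1)\bmod K}}$ -- valid because the maximum defining $E$ is attained, so $v(t_{p+j+1}-1)=v(t_{p+j})$ -- and hence depends only on $j_{p+j+1}$, which is periodic in $j$ with period $k$; you have all the ingredients, so this is a matter of saying it rather than a gap. Both routes deliver the same conclusion; yours is shorter and dispenses with the minimality normalization and the $\alpha,\beta,\gamma,k_n$ machinery.
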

\begin{proof}
	Let $a=\prod_{i=0}^{m-1} h_i$ with $h_i\in\{g_0,\ldots,g_n\}$ be an expression for $a$ of minimal length.  Define 
	\begin{equation*}
	\delta(r,s,M)=\left(\prod_{i=r+1}^{m-1}h_i\right)a^M\left(\prod_{j=0}^{s-1}h_j\right)
	\end{equation*}
	and consider relations of the form $\delta(r,s,M)=e$ with $M\geq 0$.  Fixing $r$ and $s$, there is at most one $M$ for which $\delta(r,s,M)=e$ since $a$ has infinite order.  Similarly, fixing $r$ and $M$, there is at most one $s$ for which $\delta(r,s,M)=e$ since we chose an expression for $a$ of minimal length.  This allows us to define the following functions, $\alpha:[0,m-1]\rightarrow\powerset(\N\times[0,m-1])$, $\beta:[0,m-1]\rightarrow\N$, and $\gamma:[0,m-1]\rightarrow[0,m-1]$ by
	\begin{eqnarray*}
		\alpha(r)&=&\{(M,s)|\delta(r,s,M)=e\}\\
		\beta(r)&=&\begin{cases}0\text{ if }\alpha(r)=\emptyset\\ 
			\max_{(M,s)\in\alpha(r)}M\text{ otherwise}\end{cases}\\
		\gamma(r)&=&\begin{cases}s\text{ if }(\beta(r),s)\in\alpha(r)\\
			(r+1)\bmod{m}\text{ if }\alpha(r)=\emptyset\end{cases}
	\end{eqnarray*}

	Notice that $\alpha(r)$ is always a finite set since $s$ can take on at most finitely many values and for each value, there is at most one $M$ such that $(M,s)\in\alpha(r)$.  Therefore, whenever $\alpha(r)$ is non-empty, $\beta(r)$ exists and is attained by exactly one element, $(\beta(r),s')$ of $\alpha(r)$.  By definition, $\gamma(r)=s'$, so $\gamma$ is well-defined.

	\begin{lemma}\label{prefixes}
		For all $n$, there is a $k_n$ such that 
		\begin{equation*}
			\prod_{i=1}^n h_{\gamma^{(i)}(m-1)}=a^{k_n}h_0\ldots h_{\gamma^{(n)}(m-1)}
		\end{equation*}
		where the sequence $\{k_n\}$ is defined by
		\begin{eqnarray*}
			k_0&=&-1\\
			k_{n+1}&=&k_n+\begin{cases}0\text{ if }\alpha(\gamma^{(n)}(m-1))=\emptyset\text{ and }\gamma^{(n)}(m-1)\neq m-1\\
				1\text{ if }\alpha(\gamma^{(n)}(m-1))=\emptyset\text{ and }\gamma^{(n)}(m-1)=m-1\\
				\beta(\gamma^{(n)}(m-1))+1\text{ otherwise}\end{cases}
		\end{eqnarray*}
	\end{lemma}
	\begin{proof}
		We proceed by induction.  If $n=0$, the product on the left side is empty, so we can take $k_0=-1$.

		For the induction step, suppose the lemma holds for $n-1$.  Then we can apply the induction hypothesis to reduce the problem to
		\begin{equation*}
			a^{k_{n-1}}h_0\ldots h_{\gamma^{(n-1)}(m-1)}h_{\gamma^{(n)}(m-1)}=a^{k_n}h_0\ldots h_{\gamma^{(n)}(m-1)}
		\end{equation*}
		or more simply,
		\begin{equation*}
			h_0\ldots h_{\gamma^{(n-1)}(m-1)}=a^{k_n-k_{n-1}}h_0\ldots h_{\gamma^{(n)}(m-1)-1}
		\end{equation*}
		If $\alpha(\gamma^{(n-1)}(m-1))$ is empty, then
		 \begin{equation*}
			\gamma^{(n)}(m-1)=(\gamma^{(n-1)}(m-1)+1)\bmod{m}
		\end{equation*}
		If $\gamma^{(n-1)}(m-1)=m-1$, then the product on the left is $a$ and the product on the right is $a^{k_n-k_{n-1}}$, so take $k_n=k_{n-1}+1$.  Otherwise, take $k_n=k_{n-1}$.

		On the other hand, if $\alpha(\gamma^{(n-1)}(m-1))$ is non-empty, $\gamma^{(n)}(m-1)=s$ for $(M,s)\in\alpha(\gamma^{(n-1)}(m-1))$ where $M=\beta(\gamma^{(n)}(m-1))$.  Since $(M,s)\in\alpha(\gamma^{(n-1)}(m-1))$,
		\begin{eqnarray*}
			h_0\ldots h_{\gamma^{(n-1)}(m-1)}&=&h_0\ldots h_{\gamma^{(n-1)}(m-1)}\delta(\gamma^{(n-1)}(m-1),s,M)\\
			&=&h_0\ldots h_{\gamma^{(n-1)}(m-1)}h_{\gamma^{(n-1)}(m-1)+1}\ldots h_{m-1}a^Mh_0\ldots h_{s-1}\\
			&=&a^{M+1}h_0\ldots h_{\gamma^{(n)}(m-1)-1}
		\end{eqnarray*}
		so we can take $k_n=M+1+k_{n-1}=\beta(\gamma^{(n)}(m-1))+1+k_{n-1}$.
	\end{proof}

	\begin{lemma}
		The sequence $h_{\gamma(m-1)},h_{\gamma(\gamma(m-1))},h_{\gamma^{(3)}(m-1)},\ldots$ corresponds to a non-self-intersecting path
	\end{lemma}
	\begin{proof}
		Suppose $\prod_{i=1}^{l_1}h_{\gamma^{(i)}(m-1)}=\prod_{i=1}^{l_2}h_{\gamma^{(i)}(m-1)}$ with $l_2>l_1$.  Then, by Lemma~\ref{prefixes}, 
		\begin{equation*}
			a^{k_{l_1}}h_0\ldots h_{\gamma^{(l_1)}(m-1)}=a^{k_{l_2}}h_0\ldots h_{\gamma^{(l_2)}(m-1)}
		\end{equation*}
		or,
		\begin{equation*}
			e=h_{\gamma^{(l_1)}(m-1)+1}\ldots h_{m-1}a^{k_{l_2}-k_{l_1}-1}h_0\ldots h_{\gamma^{(l_2)}(m-1)}
		\end{equation*}
		Then there are two cases.

		First, if $k_{l_1}=k_{l_2}$, then $\gamma^{(l_1)}(m-1)=\gamma^{(l_2)}(m-1)$ by the minimality of the expression for $a$.  In addition, for all $i$ with $l_1\leq i<l_2$, $\alpha(\gamma^{(i)}(m-1))=\emptyset$ and $\gamma^{(i)}(m-1)\neq m-1$.  Therefore, for all such $i$, \begin{equation*}
			\gamma^{(i+1)}(m-1)=\gamma^{(i)}(m-1)+1\bmod{m}
		\end{equation*}
		Since $\gamma^{(l_1)}(m-1)=\gamma^{(l_2)}(m-1)$, $l_2-l_1\geq m$.  But then for some $l_1\leq j<l_2$, $\gamma^{(j)}(m-1)=m-1$, which is a contradiction.

		Second, if $k_{l_1}>k_{l_2}$, then $(k_{l_2}-k_{l_1}-1,\gamma^{(l_2)}(m-1)+1)\in\alpha(\gamma^{(l_1)}(m-1))$.  Then $k_{l_1+1}\geq k_{l_1}+(k_{l_2}-k_{l-1}-1)+1=k_{l_2}$.  Since $l_2\geq l_1+1$ and the sequence of $k$'s is non-decreasing, $l_2=l_1+1$.  We can then calculate $\gamma^{(l_1+1)}(m-1)$ directly.  Note that
		\begin{eqnarray*}
			(\beta(\gamma^{(l_1)}(m-1)),\gamma^{(l_2)}(m-1)+1)&=&(k_{l_1+1}-k_{l_1}-1,\gamma^{(l_2)}(m-1)+1)\\
			&=&(k_{l_2}-k_{l_1}-1,\gamma^{(l_2)}(m-1)+1)\\
			&\in&\alpha(\gamma^{(l_1)}(m-1))
		\end{eqnarray*}
		So
		\begin{equation*}
			\gamma^{(l_1+1)}(m-1)=\gamma^{(l_2)}(m-1)+1=\gamma^{(l_1+1)}(m-1)+1
		\end{equation*}
		which is again a contradiction.
	\end{proof}

	Since $\gamma$ is a function from a finite set to itself, the sequence,
	\begin{equation*}
		\gamma(m-1),\gamma(\gamma(m-1)),\gamma^{(3)}(m-1)\ldots
	\end{equation*}
	is eventually periodic.  Let $\gamma^{(j_1)}(m-1),\ldots,\gamma^{(j_2)}(m-1)$ be a single period and take $b=\prod_{i=j_1}^{j_2}h_{\gamma^{(i)}(m-1)}$.  This expression of $b$ as a product of generators clearly fits the theorem.
\end{proof}

\begin{corollary}
	Inescapable groups are Burnside groups
\end{corollary}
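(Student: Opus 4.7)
The plan is to derive the corollary almost immediately from Theorem~\ref{thmInescBurn}, once we unpack the definitions. Recall that an inescapable group $(G,S)$ is, by definition, already a tape graph, so $G$ is infinite; we only need to verify the finite-order condition. So first I would fix an inescapable tape graph $(G,S)$ with $S = \{g_0,\ldots,g_n\}$ and argue by contradiction, assuming there exists some $a \in G$ of infinite order.

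Next, I would apply Theorem~\ref{thmInescBurn} to this $a$, obtaining an element $b = \prod_{j=0}^{k-1} g_{i_j} \in G$ together with a specific expression in the generators such that the partial products $\prod_{j=0}^N g_{i_{j \bmod k}}$ are pairwise distinct as $N$ ranges over $\mathbb{N}$. The point is that the sequence $s = (g_{i_{j \bmod k}})_{j \geq 0}$ is an infinite \emph{periodic} sequence of elements of $S$, and hence trivially computable: on input $N$, a Turing machine outputs $g_{i_{N \bmod k}}$ by consulting a finite lookup table of size $k$.

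By the conclusion of Theorem~\ref{thmInescBurn}, every prefix of $s$ yields a distinct group element, which is exactly the statement that $s$ corresponds to a non-self-intersecting infinite path in the Cayley graph. This directly contradicts the defining property of an inescapable group, namely that every infinite computable sequence of generators corresponds to a self-intersecting path. Hence no element $a \in G$ can have infinite order, so every element has finite order, and $(G,S)$ is a Burnside group as claimed.

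Since the work has already been done in Theorem~\ref{thmInescBurn}, there is no real obstacle here; the only thing to be careful about is that ``computable'' in the definition of inescapability really does include periodic sequences, which is immediate. The proof is essentially a one-line application of the previous theorem together with the observation that periodic sequences are computable.
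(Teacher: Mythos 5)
Your proposal is correct and follows the same route as the paper: both invoke Theorem~\ref{thmInescBurn} to extract a periodic (hence computable) generator sequence whose partial products are pairwise distinct, contradicting inescapability. The only cosmetic difference is that you argue by contradiction while the paper states the contrapositive directly.
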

\begin{proof}
	Suppose $(G,S)$ is a tape graph and $G$ is not a Burnside group.  Then $G$ has an element of infinite order.  By Theorem~\ref{thmInescBurn}, we can find some $i_0,\ldots,i_{k-1}$ such that
	\begin{equation*}
		\prod_{j=0}^N g_{i_{j\bmod{k}}}
	\end{equation*}
	is distinct for each $N$.  The sequence $\{g_{i_{j\bmod{k}}}\}_{j\in\N}$ is computable (in fact, it's regular), so $G$ is not inescapable.
\end{proof}

\subsection{Further Remarks}
	As was previously mentioned, generalizing to multiple tapes and multiple heads in this context is done in the same way as in the context of standard Turing machines.  For example, a one-tape machine with multiple heads can be simulated by a one-tape machine with one head.  The locations of all the heads can simply be marked on the tape with special symbols and for each step of the simulated machine, the simulating machine can search through the entire tape for all the head symbols and then update the tape accordingly.  Similarly, a machine with multiple, identical tapes can be simulated by a machine with a single tape of the same type.  Simply enlarge the tape alphabet to tuples of tape symbols together with special symbols for the head on each tape and follow the procedure for multiple heads.

	The only major difference is in a machine with multiple, different tapes.  In this case, the class of functions computable by the machine is the class of functions in or below the join of the r.e. degrees of the word problems of the tapes.  Just as in Section~\ref{secTDegrees}, this machine is mutually simulatable with a standard Turing machine with oracles for the word problem of each tape.  In fact, this case subsumes the multiple-head, single-tape and multiple-head, multiple-tape cases.

\bibliographystyle{amsplain}
\bibliography{../bibliography}
\end{document}